\tikzset{node distance=2cm, auto}
\crefname{equation}{equation}{equations}
\newcommand{\N}{\mathbb{N}}
\renewcommand{\P}{\mathbb{P}}
\newcommand{\RP}{\mathbb{RP}}
\newcommand{\R}{\mathbb{R}}
\renewcommand{\S}{\mathbb{S}}
\newcommand{\E}{\mathscr{E}}
\newcommand{\contained}{\subset}
\newcommand{\suchthat}{\, : \,}
\newcommand{\st}{\suchthat}
\DeclarePairedDelimiterX\norm[1]\lVert\rVert{\ifblank{#1}{\:\cdot\:}{#1}}
\DeclarePairedDelimiterX\abs[1]\lvert\rvert{\ifblank{#1}{\:\cdot\:}{#1}}
\DeclarePairedDelimiterX\set[1]{\{}{\}}{\ifblank{#1}{\: \:}{#1}}
\DeclarePairedDelimiterX\innerprod[1]\langle\rangle{\ifblank{#1}{\,\cdot,\cdot\,}{#1}}
\DeclarePairedDelimiterX\floor[1]\lfloor\rfloor{\ifblank{#1}{\:\cdot\:}{#1}}
\newcommand{\expectation}[2]{\operatorname{E}_{#1}\left[#2\right]}
\newcommand{\quotemarks}[1]{``#1''}
\newcommand{\tendsto}{\rightarrow}
\newcommand{\map}[3]{#1\colon #2 \rightarrow #3}
\newcommand{\diamondgen}{\diamond_{\mathrm{gen}}}
\newcommand{\diamondproj}{\diamond_{\P}}
\newcommand{\qde}{\textup{\textsc{qde}}\xspace}
\newcommand{\gde}{\textup{\textsc{gde}}\xspace}
\newcommand{\qgde}{\textup{\textsc{qgde}}\xspace}
\newcommand{\pgde}{\textup{\textsc{pgde}}\xspace}
\newcommand{\qpgde}{\textup{\textsc{qpgde}}\xspace}
\theoremstyle{plain}
\newtheorem{theorem}{Theorem}[section]
\newtheorem{conjecture}[theorem]{Conjecture}
\newtheorem{corollary}[theorem]{Corollary}
\newtheorem{lemma}[theorem]{Lemma}
\newtheorem{proposition}[theorem]{Proposition}
\theoremstyle{definition}
\newtheorem{definition}[theorem]{Definition}
\theoremstyle{remark}
\newcommand{\Z}{\mathbb Z}
\renewcommand\fnum@algorithm{\fname@algorithm~\thealgorithm:}% <-- with colon at the end
\begin{document}

	\title[Low energy points on the sphere and the real projective plane]{Low energy points on the sphere\\ and the real projective plane}
	
	\author{Carlos Beltrán}
	\address{Carlos Beltrán: Departamento de Matemáticas, Estadística y Computación, Universidad de Cantabria,  Avda. Los Castros, s/n, 39005 Santander, Spain}
	\email{beltranc@unican.es}
	
	\author{Ujué Etayo}
	\address{Ujué Etayo: Departamento de Matemáticas, Estadística y Computación, Universidad de Cantabria,  Avda. Los Castros, s/n, 39005 Santander, Spain}
	\email{etayomu@unican.es}
	
	\author{Pedro R. López-Gómez}
	\address{Pedro R. López-Gómez: Departamento de Matemáticas, Estadística y Computación, Universidad de Cantabria,  Avda. Los Castros, s/n, 39005 Santander, Spain}
	\email{lopezpr@unican.es}
	
	\date{\today{}}
	
	\thanks{The authors have been supported by grant PID2020-113887GB-I00 funded by MCIN/AEI (10.13039/501100011033). The third author has also been supported by a predoctoral grant \quotemarks{Concepción Arenal} funded by Univ. Cantabria.}
	
	\subjclass[2010]{31C12, 31C20, 41A60, 52C15, 52C35, 52A40.}
	
	\keywords{Minimal logarithmic energy, minimal Green energy, constructive spherical points, constructive projective points}

	\begin{abstract}
		We present a generalization of a family of points on $\S^2$, the Diamond ensemble, containing collections of $N$ points on $\S^2$ with very small logarithmic energy for all $N\in\mathbb{N}$. We extend this construction to the real projective plane $\RP^2$ and we obtain upper and lower bounds with explicit constants for the Green and logarithmic energy on this last space.
	\end{abstract}
	
	\maketitle
	
	\hypersetup{linkcolor=black}
	\tableofcontents
	\hypersetup{linkcolor=Maroon}

% Introduction ================================================================================

		\section{Introduction and main results}\label{sec:intro}

In this article we give an easily constructible set of reasonably well-distributed points on the sphere $\S^2$ and the real projective plane $\RP^2$.
There are many ways to define well-distributed points; see \cite{BrauchartGrabner2015} for a survey on the subject and \cite{BorodachovHardinSaff2019} for a recent monograph.
Here we work with two specific definitions: points with low logarithmic energy and low spherical cap discrepancy (the latter only for spherical points). In our case, both quantities can be computed analytically, a remarkably infrequent property among constructible sequences.
	
\subsection{Logarithmic energy}
The logarithmic energy of a set of spherical points $\omega_{N} = \{ x_{1}, \ldots  ,x_{N} \}$ is
\[
\E_{\log}(\omega_{N}) = \sum_{i \neq j} \log\| x_{i} - x_{j} \|^{-1}. 
\]
This quantity is related to the transfinite diameter and the capacity of the set by classical potential theory (see for example \cite{Landkof1972}) and can also be seen as a limit case of Riesz $s$-potentials (see for example \cite{BorodachovHardinSaff2019}). Shub and Smale found a relation between the condition number (a quantity that measures the sensitivity of zero finding) of polynomials and the logarithmic energy of the associated spherical points (see \cite{ShubSmale1993}). Inspired by this relation, they proposed a problem that is nowadays known as Smale's 7th problem \cite{Smale1998}: on input $N$, produce a set $\omega_N$ of $N$ points on the unit sphere such that
\[
\E_{\log}(\omega_{N})-m_N\leq c\log N,
\]
where $c$ is some universal constant and $m_N$ is the minimum possible value of $\E_{\log}$ among all the collections of $N$ spherical points.

	\subsection{The value of $m_N$}
	Motivated by Smale's 7th problem, the asymptotic expansion of $m_N$ has been deeply studied. The current state of the art is due to \cites{Wagner1989,RakhmanovSaffZhou1994,Dubickas1996,Lauritsen2021,Brauchart2008,BeterminSandier2018} and can be summarized in the following formula:
	\begin{equation}\label{eq:mN}
		m_N=W_{\log}(\S^2)N^2-\frac{1}{2}N\log N +C_{\log}N+o(N),
	\end{equation}
	where
	\begin{equation}\label{eq:WlogS2}
		W_{\log}(\S^2)=\frac{1}{(4\pi)^2}\int_{x,y\in\S^2}\log\norm{x-y}^{-1}\,dxdy=\frac{1}{2}-\log2
	\end{equation}
	is the continuous energy and $C_{\log}$ is a constant. Here, as usual, $o(N)$ is a function of $N$ with the property that $\lim_{N\tendsto\infty}o(N)/N=0$. From \cite{BeterminSandier2018} and \cite{Lauritsen2021} we know that
	\begin{equation}\label{eq:conjecture}
		-0.0568\dotso=-\frac{3}{4}+\log{2}\leq C_{\log}\leq 2\log{2}+\frac{1}{2}\log{\frac{2}{3}}+3\log{\frac{\sqrt{\pi}}{\varGamma(1/3)}}=-0.0556\dotso
	\end{equation}
	The upper bound has been conjectured to be an equality; see \cites{BrauchartHardinSaff2012,BeterminSandier2018} and the monograph \cite{BorodachovHardinSaff2019} for context.

\subsection{Discrepancy}
	
Let $\mu$ be the Lebesgue measure on the sphere $\S^2$. We say that a family of points $\omega_{N}$ (where each $\omega_N$ is a collection of $N$ spherical points) is asymptotically uniformly distributed if
\begin{equation*}
\lim\limits_{N\rightarrow\infty}
\frac{\#\left(\omega_{N}\cap B\right)}{N}
=
\frac{\mu(B)}{\mu\left( \mathbb{S}^{2} \right)}
\end{equation*}
for all $\mu$-continuous Borel sets $B\subset \mathbb{S}^{2}$.
The discrepancy of the family of points $\omega_{N}$ is the rate at which
\begin{equation*}
\left|
\frac{\#\left(\omega_{N}\cap B\right)}{N}
-
\frac{\mu(B)}{\mu\left( \mathbb{S}^{2} \right)}
\right|
\end{equation*}
tends to $0$ in some norm and for some suitable collection of test sets.

A spherical cap $C = C(z,t)$ centered at $z\in \mathbb{S}^{2}$ with height $t\in[-1,1]$ is  the set
\begin{equation*}
C(z,t) = \{ y\in\mathbb{S}^{2} : \left\langle z, y \right\rangle > t \},
\end{equation*}
where we denote by $\left\langle z, y \right\rangle$ the usual inner product in $\mathbb{R}^{3}$.
Let us consider the set of all the spherical caps on $\mathbb{S}^2$, which we denote by $\text{cap}$. 
Then, the spherical cap discrepancy of a set of points $\omega_{N}$ with respect to the supremum norm is defined as
\begin{equation*}%\label{eq_def_Discr}
 D_{\text{sup},\text{cap}} (\omega_{N}) 
=
\sup\limits_{C \in \text{cap}}
\left|
\frac{\#\left(\omega_{N}\cap C\right)}{N}
-
\frac{\mu(C)}{\mu\left( \mathbb{S}^{2} \right)}
\right|.
\end{equation*}

\subsection{Minimal spherical cap discrepancy}
In \cites{beck_1984, Beck1984} it is proved that there exist  constants $c,c'>0$, independent of $N$, such that 
\begin{equation*}
cN^{-3/4} \leq \min_{\omega_N \subset \mathbb{S}^2} D_{\text{sup},\text{cap}}\left( \omega_{N}\right) \leq c'N^{-3/4}\sqrt{\log{N}}.
\end{equation*}
The proof of the upper bound is non-constructive. In fact, the minimal discrepancy that has been obtained for a deterministic set of points is of the order of $N^{-1/2}$ and is satisfied for the families of Fibonacci spiral points \cite{Aistleitner2012}, for any deterministic realization of the Diamond ensemble \cite{Etayo22}, for the HEALPix Points \cite{Healpix} and for families of points obtained from perturbed lattices under the Lambert projection \cite{Damir_01}.

\subsection{Distributing points on  the real projective plane}
Recall that the real projective plane $\RP^2$ can be seen as a quotient of $\S^2$ by identifying antipodal points.
One can then represent an element of the real projective plane by a unit-length vector $x\in\S^2$. 
By a slight abuse of notation, we will often treat the element itself as such a vector. 
Under this identification, the \emph{chordal distance} or \emph{projective distance} between two points $x,y\in\RP^2$ is
	\begin{equation}\label{eq:proj_distance}
		d_{\P}(x,y)=\sqrt{1-\innerprod{x,y}^2}.
	\end{equation}
Equipped with this distance, $\RP^2$ is a compact metric space (see for example \cite{ConwayHardinSloane1996}). It is easy to check that formula \eqref{eq:proj_distance} is the sine of the acute angle between the lines in $\R^3$ corresponding to $x$ and $y$.

	Given a collection $\omega_N=\set{x_1,\dotsc,x_N}\contained\RP^2$ of points in the real projective plane, we define its projective logarithmic energy as
	\begin{equation}\label{eq:log_energy_proj}
		\E_{\log}^{\P}(\omega_N)=-\sum_{i\neq j}\log d_{\P}(x_i,x_j)=-\sum_{i\neq j}\log\sqrt{1-\innerprod{x_i,x_j}^2}.
	\end{equation}
	
\subsection{Logarithmic and Green energies}	
	
	Both in the sphere and the real projective plane the logarithmic energy coincides, up to multiplicative and additive constants, with the Green energy of these spaces seen as Riemannian manifolds, as studied in \cites{BeltranCorralCriado2019,Andersonetal}. In particular, in the real projective plane we have
	\begin{equation}\label{eq:Greenlog}
		\E_{G}^{\P}(\omega_N)=\frac{1}{2\pi}\left(\E_{\log}^{\P}(\omega_N)- N(N-1)W_{\log}(\RP^2)\right),
	\end{equation}
	where 
	\begin{equation}\label{eq:WlogRP2}
		W_{\log}(\RP^2)=1-\log 2
	\end{equation}
	 is the continuous energy of the real projective plane (see \cite{Andersonetal}). Note that in \cite{Andersonetal} the authors use a different convention: they take the Riemannian structure normalized so that the total volume of $\RP^2$ is equal to $1$, which results in a lacking constant $1/(2\pi)$ that appears in our formulas but not in theirs. Taking this into account, it has been proved in \cite{Andersonetal} that
	\begin{equation}\label{eq:minGreenEnergy}
		\min\limits_{\omega_N\contained\RP^2}\E_{G}^{\P}(\omega_N)=-\frac{1}{4\pi}N\log{N}+O(N),
	\end{equation} 
	which readily yields
	\begin{equation}\label{eq:cotaeprojanderson}
		\min_{\omega_N\contained\RP^2}\E_{\log}^{\P}(\omega_N)=W_{\log}(\RP^2)N^2-\frac{1}{2}N\log{N}+O(N).
	\end{equation}

\subsection{Main results and a conjecture}\label{subsec:mainresults}

In \cites{BeltranEtayo2020,9031002} the authors define a family of points called the Diamond ensemble, which,	
briefly speaking, is a sequence of sets of points on $\S^2$, depending on many parameters, that are placed on the sphere as follows. First choose $p$ parallels. Then, for every parallel $j$, choose $r_j$ points and allocate them in the vertices of a regular $n$-gon on that parallel rotated by a random phase $\theta_j\in[0,2\pi]$. Finally, add the north and south poles.

The main advantage of the sets of points coming from the Diamond ensemble is that both the logarithmic energy and the spherical cap discrepancy can be computed (see \cites{BeltranEtayo2020,9031002,Etayo22}). Its main drawback is that it is not defined for any choice of $N$, but only for some infinite subsequences of natural numbers.

In this paper, we present a generalization of the Diamond ensemble that can be defined for any number of points and that, at the same time, maintains the property that the logarithmic energy and the spherical cap discrepancy can be analytically computed and they are actually the same as in the original Diamond ensemble. This is established in the following results.
	
\begin{theorem}\label{thm:finalmodel2}
	For any $N\in\mathbb{N}$ there exists an explicit set $\diamondgen(N)$ of $N$ points on $\mathbb{S}^2$, which depends on certain random parameters, whose expected logarithmic energy is
	\begin{align*}
		W_{\log}(\S^2)N^2-\frac{1}{2}N\log N+c_{\diamond}N+O\left(\sqrt{N}\log N\right),
	\end{align*}
	where $c_{\diamond}=-0.049222\dotso$ is as in \cref{eq:Cdiamond} below. Moreover, for any choice of the random parameters, its spherical cap discrepancy satisfies
	\begin{equation*}
		\frac{c_{1}}{\sqrt{N}}
		\leq
		D_{\text{\rm sup},\text{\rm cap}}\left(\diamondgen(N)\right)
		\leq
		\frac{c_{2}}{\sqrt{N}},
	\end{equation*}
	for some universal constants $c_1,c_2>0$.
\end{theorem}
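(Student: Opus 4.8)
The plan is to realize $\diamondgen(N)$ as a version of the Diamond ensemble in which the number of parallels and the number of points on each are chosen flexibly so that the total is \emph{exactly} $N$ for every $N$, while retaining enough regularity to keep both the energy and the discrepancy computable. Concretely, I would place the two poles, choose $p\approx c\sqrt{N}$ parallels at heights $z_1>\dots>z_p$ dictated by an area-equipartition rule, and distribute the remaining $N-2$ points as regular $r_j$-gons with $\sum_{j=1}^{p} r_j=N-2$, where $r_j$ is roughly proportional to the radius $\sqrt{1-z_j^2}$ of the $j$-th parallel, each polygon rotated by an independent uniform phase $\theta_j\in[0,2\pi)$. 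The only genuinely new ingredient compared with \cite{BeltranEtayo2020} is an explicit rule absorbing the \emph{leftover} points (the gap between $N-2$ and the nearest admissible configuration) into the $r_j$ without spoiling the asymptotics; I would check that this adjustment changes each $r_j$ by at most $O(1)$ and hence perturbs the energy only at the allowed order.

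For the energy I would split
\[
\E_{\log}(\diamondgen(N))=\sum_{\text{same parallel}}+\sum_{\text{distinct parallels}}+(\text{pole terms}),
\]
and compute the expectation over the phases term by term. The same-parallel contribution is deterministic: a regular $r$-gon inscribed in a circle of radius $\rho$ on $\S^2$ has logarithmic energy $-r\log r-r(r-1)\log\rho$ by the classical identity $\prod_{k=1}^{r-1}\lvert 1-e^{2\pi i k/r}\rvert=r$, so summing over $j$ produces the $-\tfrac12 N\log N$ term together with part of the linear term. For two distinct parallels the independent random phases decorrelate the two polygons, and $\mathbb{E}_{\theta_j,\theta_k}$ of the cross-energy collapses to $r_jr_k$ times the circle-averaged potential $\tfrac{1}{2\pi}\int_0^{2\pi}\log\norm{x-y(\phi)}^{-1}\,d\phi$, which is available in closed form. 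Summing these averaged interactions over all pairs $(j,k)$ and comparing the double sum with the continuous energy integral of \eqref{eq:WlogS2} yields the $W_{\log}(\S^2)N^2$ term, while an Euler--Maclaurin / Riemann-sum analysis of the remainder extracts the precise linear coefficient $c_\diamond$.

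For the discrepancy the bounds hold for every realization of the phases, so the randomness plays no role. The upper bound $c_2/\sqrt{N}$ follows from the equipartition design: each zone between consecutive parallels carries the correct proportion of points up to $O(1)$ and the $\approx\sqrt{N}$ points on each parallel are equally spaced, so any spherical cap can be compared with a union of zones with an error of at most $O(\sqrt{N})$ points, i.e.\ $O(1/\sqrt{N})$ in normalized discrepancy; this is exactly the estimate obtained for the original ensemble in \cite{Etayo22}, and I would adapt that argument to the present parameters. The matching lower bound comes from testing polar caps $C(e_3,t)=\{\,y:\langle e_3,y\rangle>t\,\}$: as $t$ crosses a height $z_j$, the count $\#(\diamondgen(N)\cap C)$ jumps by the full $r_j\approx\sqrt{N}$, whereas $\mu(C)/\mu(\S^2)$ varies continuously, forcing a deviation of order $\sqrt{N}/N=1/\sqrt{N}$ for some $t$.

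I expect the main obstacle to be the energy estimate, namely pinning down the exact constant $c_\diamond$ while keeping the error at $O(\sqrt{N}\log N)$. The double sum over pairs of parallels is most singular for adjacent parallels, where the averaged potential behaves almost logarithmically in the height gap; handling these near-diagonal terms and simultaneously matching the continuous energy to sufficient precision is the delicate part, and it is precisely where the area-equipartition choice of the heights $z_j$ must be tuned so that the subleading corrections assemble into exactly the claimed linear term rather than a larger $O(N\log N)$ error.
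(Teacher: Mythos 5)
Your overall skeleton (poles plus randomly rotated regular polygons on parallels, expected energy via circle-averaged potentials and an Euler--Maclaurin comparison, discrepancy handled by adapting \cite{Etayo22}) is the same as the paper's, and your discrepancy sketch is essentially the argument the paper invokes. The genuine gap is in the energy constant. The value $c_{\diamond}=-0.049222\dotso$ of \eqref{eq:Cdiamond} is not a universal constant of parallel-type constructions: the coefficient of $N$ depends on the whole profile $j\mapsto(r_j,z_j)$, that is, on the local ratio between the spacing of points within a parallel and the spacing between consecutive parallels, and different profiles give genuinely different linear terms (this is documented in \cite{BeltranEtayo2020}, where several choices of $r(x)$ are compared and yield different constants). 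The number in \eqref{eq:Cdiamond} is attached to one specific profile: the quasioptimal piecewise linear function \eqref{eq:r(x)_Quasioptimal_gen}, with integer slopes $6,5,4,3,2,1$ on prescribed intervals, combined with the energy-minimizing heights \eqref{eq:heights} of \cite{BeltranEtayo2020}*{Prop. 2.5}; the integers $239, 227, 73,\dotsc$ in \eqref{eq:Cdiamond} arise precisely from evaluating the resulting closed-form sums at those breakpoints and slopes. Your prescription --- area-equipartition heights with $r_j$ roughly proportional to the radius $\sqrt{1-z_j^2}$ --- determines a different profile (it forces parallels essentially uniform in polar angle with $r_j\propto\sin\theta_j$, or the zonal equal-area layout, depending on how the equipartition is meant), whose analysis produces a different linear coefficient; moreover, since that profile is not piecewise linear with integer coefficients, sums such as $\sum_j r_j\log r_j$ no longer admit closed forms, so the exact computability you rely on is also lost. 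As written, your argument can only yield $W_{\log}(\S^2)N^2-\frac{1}{2}N\log N+O(N)$, not the theorem with the stated constant.

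There is a second, smaller gap in the leftover-absorption step. From \quotemarks{this adjustment changes each $r_j$ by at most $O(1)$} it does not follow that the energy is perturbed only at the order $O(\sqrt{N}\log N)$: you add $\Theta(\sqrt{N})$ points and shift every height, which changes the energy by $\Theta(N^{3/2})$ in absolute terms (the leading term is evaluated at a different $N$), so any perturbation bound must be played off against the target expansion at the new value of $N$, and at that precision you effectively need an exact formula for the adjusted configuration. This is exactly what the paper supplies: it extends the Diamond-ensemble machinery to piecewise linear $r(x)$ with jump discontinuities (\cref{def:Crx}, \cref{cor:exactEnergy}, \cref{thm:asymptoticEnergy}), so that the exact formula of \cref{thm:2.6} applies verbatim to the adjusted points, and then realizes every $N\geq 241$ through the explicit parameters $(m,\gamma,\delta,\varepsilon)$ of \cref{subsec:proof1}, with $\gamma\leq 800$ extra points on each of the $2m-1$ central parallels plus $\eta$ single extra points. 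If you replace your perturbation claim by this device and your profile by \eqref{eq:r(x)_Quasioptimal_gen}, your outline becomes the paper's proof.
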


The explicit construction of the set $\diamondgen(N)$ is given in Section \ref{sec:diamond}.

Now we move to the real projective plane. Through its relationship with the sphere, we will easily establish a lower bound for the projective logarithmic energy.
	\begin{proposition}\label{prop:lowerboundproj}
		The projective logarithmic energy of any collection of $N$ points in $\RP^2$ satisfies
		\begin{align}
			\min_{\omega_N\contained\RP^2}\E^\P_{\mathrm{log}}(\omega_N)&\geq W_{\log}(\RP^2)N^2-\frac{1}{2}N\log{N}+\left(C_{\log}-\frac{1}{2}\log{2}\right)N+o(N), \label{eq:lowerboundProjectiveEnergy}
		\end{align}
with $W_{\log}(\RP^2)$ given by \eqref{eq:WlogRP2}. 
Note that $C_{\log}-\frac{1}{2}\log{2}\geq-\frac{3}{4}+\frac{1}{2}\log{2}=-0.403426\ldots$
\end{proposition}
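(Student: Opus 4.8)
The plan is to reduce the statement to the known lower bound for the logarithmic energy on $\S^2$ by lifting each projective point to its pair of antipodal representatives on the sphere. Given an arbitrary configuration $\omega_N=\set{x_1,\dotsc,x_N}\contained\RP^2$ of distinct points, represented by unit vectors $x_i\in\S^2$, I would form the spherical configuration $\tilde{\omega}_{2N}=\set{x_1,-x_1,\dotsc,x_N,-x_N}\contained\S^2$. Since the $x_i$ are distinct in $\RP^2$, that is $x_i\neq\pm x_j$ for $i\neq j$, these are $2N$ distinct points on the sphere (if two projective points coincide the projective energy is $+\infty$ and the bound is trivial, so this case may be ignored). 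It then suffices to relate $\E_{\log}(\tilde{\omega}_{2N})$ to $\E_{\log}^{\P}(\omega_N)$ and to invoke \eqref{eq:mN}.

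The crux is an exact identity between the two energies, obtained by bookkeeping the pairwise chordal distances in $\tilde{\omega}_{2N}$. There are two kinds of pairs. First, each antipodal pair $\set{x_i,-x_i}$ contributes the distance $\norm{x_i-(-x_i)}=2$. Second, for $i\neq j$, writing $t=\innerprod{x_i,x_j}$, the four distances between $\set{\pm x_i}$ and $\set{\pm x_j}$ are $\sqrt{2-2t}$ and $\sqrt{2+2t}$, each occurring twice, so their product is $(2-2t)(2+2t)=4(1-t^2)=4\,d_{\P}(x_i,x_j)^2$. Summing $-\log\norm{\,\cdot\,}$ over all ordered pairs and using $\sum_{i\neq j}\log d_{\P}(x_i,x_j)=-\E_{\log}^{\P}(\omega_N)$, the self-pairs contribute $-2N\log2$ and the cross-pairs contribute $-2N(N-1)\log2+2\,\E_{\log}^{\P}(\omega_N)$, which gives
\begin{equation*}
	\E_{\log}(\tilde{\omega}_{2N})=2\,\E_{\log}^{\P}(\omega_N)-2N^2\log2,
	\qquad\text{equivalently}\qquad
	\E_{\log}^{\P}(\omega_N)=\tfrac12\E_{\log}(\tilde{\omega}_{2N})+N^2\log2.
\end{equation*}

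To finish, I would use the trivial comparison $\E_{\log}(\tilde{\omega}_{2N})\geq m_{2N}$ together with the expansion \eqref{eq:mN} evaluated at $2N$ points and with $W_{\log}(\S^2)=\tfrac12-\log2$ from \eqref{eq:WlogS2}. Expanding
\begin{equation*}
	\tfrac12 m_{2N}+N^2\log2=\left(\tfrac12-\log2\right)2N^2-\tfrac12 N\log(2N)+C_{\log}N+N^2\log2+o(N)
\end{equation*}
and simplifying $\log(2N)=\log N+\log2$ collapses the $N^2$ coefficient to $1-\log2=W_{\log}(\RP^2)$ as in \eqref{eq:WlogRP2} and the linear coefficient to $C_{\log}-\tfrac12\log2$, yielding exactly \eqref{eq:lowerboundProjectiveEnergy}. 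Since this lower bound is independent of $\omega_N$, taking the minimum over all configurations completes the proof. The only genuine content is the exact energy identity above; the comparison with $m_{2N}$ is immediate and, because we carry the full first three terms of \eqref{eq:mN}, nothing is lost in the linear term, so the main thing to get right is the combinatorial accounting of the three distance types (the antipodal self-distance and the two cross-distances) and the resulting additive constant $-\tfrac12\log2$.
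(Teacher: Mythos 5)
Your proposal is correct and follows essentially the same route as the paper: the exact identity $\E_{\log}^{\P}(\omega_N)=\tfrac12\E_{\log}(\{\omega_N,-\omega_N\})+N^2\log2$ you derive is precisely the paper's equation \eqref{eq:energycomparison}, and the paper likewise concludes by bounding the spherical energy of the antipodal configuration below by $m_{2N}$ via \eqref{eq:mN} and simplifying. The only difference is presentational: you spell out the distance bookkeeping behind \eqref{eq:energycomparison} and the degenerate case of coincident points, which the paper treats as an elementary one-line computation.
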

\Cref{prop:lowerboundproj} above improves the lower bound in \cref{eq:minGreenEnergy}.
For the upper bound, we define a set of points on $\RP^2$ by identifying antipodal points on $\S^2$ for which we are able to compute its logarithmic energy.
\begin{theorem}\label{thm:maintheorem2proj}
For any $N\in\N$ there exists an explicit set $\diamondproj(N)$ of $N$ points on $\RP^2$, which depends on certain random parameters, whose expected logarithmic energy is
$$
W_{\log}(\RP^2)N^2-\frac12N\log N+\left(c_\diamond -\frac12\log2\right)N+O\left(\sqrt{N}\log N\right).
$$
Note that $c_\diamond -\frac12\log2=-0.395795\ldots$
	\end{theorem}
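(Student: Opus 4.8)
The plan is to reduce everything to the spherical estimate of \cref{thm:finalmodel2} via the antipodal double cover $\S^2\to\RP^2$. Represent a projective configuration $\omega_N=\set{x_1,\dotsc,x_N}\contained\RP^2$ by unit vectors, no two of which are antipodal, and let
\[
\widetilde\omega_{2N}=\set{x_1,\dotsc,x_N,-x_1,\dotsc,-x_N}\contained\S^2
\]
be its antipodal lift, a set of $2N$ points on the sphere. The starting point is the elementary identity
\[
\norm{x_i-x_j}\,\norm{x_i+x_j}=2\,d_\P(x_i,x_j),
\]
which is immediate from $\norm{x_i-x_j}^2=2(1-\innerprod{x_i,x_j})$, $\norm{x_i+x_j}^2=2(1+\innerprod{x_i,x_j})$ and \eqref{eq:proj_distance}. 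I would then split the spherical energy $\E_{\log}(\widetilde\omega_{2N})$ according to the sign class of each of its two indices. The mixed pairs range over all ordered $(i,j)$, and isolating the $N$ diagonal terms $x_i,-x_i$, which lie at distance $2$, yields
\[
\E_{\log}(\widetilde\omega_{2N})=2\sum_{i\neq j}\log\bigl(\norm{x_i-x_j}\,\norm{x_i+x_j}\bigr)^{-1}-2N\log2.
\]
Substituting the identity above and recalling \eqref{eq:log_energy_proj} collapses this to the clean relation
\[
\E^\P_{\log}(\omega_N)=\tfrac12\,\E_{\log}(\widetilde\omega_{2N})+N^2\log2 .
\]

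With this relation established, I would define $\diamondproj(N)$ to be the image in $\RP^2$ of an antipodally symmetric realization of the generalized Diamond ensemble on $2N$ points, so that its lift $\widetilde\omega_{2N}$ is exactly that spherical set and \cref{thm:finalmodel2} applies to it. The remainder is bookkeeping: inserting
\[
\expectation{}{\E_{\log}(\widetilde\omega_{2N})}=W_{\log}(\S^2)(2N)^2-\tfrac12(2N)\log(2N)+c_\diamond(2N)+O\bigl(\sqrt{N}\log N\bigr)
\]
into the relation above and simplifying with $W_{\log}(\S^2)=\tfrac12-\log2$ and $W_{\log}(\RP^2)=1-\log2$. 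The quadratic terms combine as $2N^2\bigl(\tfrac12-\log2\bigr)+N^2\log2=N^2(1-\log2)=W_{\log}(\RP^2)N^2$; halving $-\tfrac12(2N)\log(2N)=-N\log N-N\log2$ produces the term $-\tfrac12 N\log N$ together with a linear contribution $-\tfrac12 N\log2$; and halving $c_\diamond(2N)$ gives $c_\diamond N$. Collecting the linear terms leaves $\bigl(c_\diamond-\tfrac12\log2\bigr)N$, which is precisely the claimed expansion.

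The crux, and the only nonroutine step, is producing an antipodally symmetric member of the Diamond family on $2N$ points to which \cref{thm:finalmodel2} genuinely applies. Revisiting the explicit construction of \cref{sec:diamond}, I would pair the parallels symmetrically about the equator—a parallel at height $h$ with one at height $-h$ carrying the same number of points—adjoin the two poles as a single antipodal pair, and couple the random phases of paired parallels by a half-turn so that the southern points are exactly the antipodes of the northern ones. Because the expected-energy bound of \cref{thm:finalmodel2} is governed precisely by the parameters (number of parallels, points per parallel, and phases) that this symmetric choice constrains in pairs, the estimate transfers to $\widetilde\omega_{2N}$ unchanged. The one thing that must be checked carefully is that the symmetry requirement is compatible with a parameter choice producing exactly $2N$ points with the stated $O\bigl(\sqrt{N}\log N\bigr)$ error; this is where the evenness of $2N$ and the flexibility of the construction are used.
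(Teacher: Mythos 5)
Your reduction via \eqref{eq:energycomparison} is correct (and is exactly the paper's identity, derived the same way), and your overall strategy---lift to an antipodally symmetric $2N$-point Diamond configuration and apply \cref{thm:finalmodel2}---is also the paper's strategy. You even correctly identify the crux: whether \cref{thm:finalmodel2} ``genuinely applies'' to the phase-coupled configuration. But your resolution of that crux is an assertion that hides a real gap. \cref{thm:finalmodel2} computes an \emph{expectation over independent phases} $\theta_1,\dotsc,\theta_{2M-1}$, whereas your antipodal lift forces $\theta_{2M-j}=\theta_j+\pi$, so the relevant expectation lives on a different (smaller) probability space, and it is not true that ``the estimate transfers unchanged'' merely because the parameters are constrained in pairs. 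Concretely, for each pair of mirror parallels $j$ and $2M-j$, the independent-phase average of their mutual energy is $-2r_j^2\log(1+z_j)$, while the phase-coupled value is deterministic and equals $-2r_j^2\log(1+z_j)-2r_j\log\bigl(1-\tfrac{(z_j-1)^{r_j}}{(z_j+1)^{r_j}}\bigr)$; the correction terms are nonzero and must be summed over all $j$ and controlled.

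This is precisely what the paper's \cref{thm:energyAntipodal} does, and it is not free: \cref{lemma:energy_parallels} computes the coupled pair-of-parallels energy exactly via the product formula \eqref{Grad:1.394}, and \cref{lemma:lowerboundheights,lemma:1} show that the total correction $-2\sum_{j}r_j\log\bigl(1-\tfrac{(z_j-1)^{r_j}}{(z_j+1)^{r_j}}\bigr)$ is $O(M)=O(\sqrt{N})$, hence absorbable in the error term. The bound could conceivably fail for a badly designed ensemble: near the equator $z_j\to 0$, so $\bigl(\tfrac{1-z_j}{1+z_j}\bigr)^{r_j}$ is close to $1$ unless $z_jr_j$ stays bounded below, which is guaranteed only by the height lower bound $z_{M-j}\geq Kj/M$ of \cref{lemma:lowerboundheights} combined with $r_j\asymp j$. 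Without establishing some estimate of this kind, your argument does not deliver the claimed $O\bigl(\sqrt{N}\log N\bigr)$ error term; this quantitative comparison between the independent-phase and antipodally coupled expectations is the missing ingredient.
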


For completeness, we write down the corresponding values of the Green energy both for the upper bound and for the lower bound.

\begin{corollary}\label{cor:boundsgreen}
	The minimum value of the Green energy of a set $\omega_N$ of $N$ points on $\RP^2$ satisfies
	\begin{multline*}-\frac{N}{4\pi}\log{N}+\frac{1}{2\pi}\left(\frac{1}{4}-\frac{1}{2}\log{2}\right)N +o(N)\\\leq \min_{\omega_N\contained\RP^2}\E_{G}^{\P}(\omega_N)\leq\\ -\frac{N}{4\pi}\log{N}+ \frac{1}{2\pi}\left(c_{\diamond}+1-\frac{3}{2}\log{2}\right)N+o(N),
	\end{multline*}
	with $c_{\diamond}$ as in \eqref{eq:Cdiamond}. The upper bound is the expected value of the Green energy of $\diamondproj(N)$.
\end{corollary}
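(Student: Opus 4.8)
The plan is to exploit the identity \eqref{eq:Greenlog}, which expresses $\E_G^\P(\omega_N)$ as an increasing affine function of $\E_{\log}^\P(\omega_N)$. Because the multiplicative factor $1/(2\pi)$ is positive, minimizing the Green energy over all configurations $\omega_N\contained\RP^2$ is equivalent to minimizing the logarithmic energy, so the bounds on $\min\E_{\log}^\P$ from \cref{prop:lowerboundproj} and \cref{thm:maintheorem2proj} transfer directly once I substitute them into \eqref{eq:Greenlog} and simplify. The only genuine computation is the bookkeeping of the constant in front of $N$, where I must be careful to replace $N(N-1)W_{\log}(\RP^2)$ by $W_{\log}(\RP^2)N^2-W_{\log}(\RP^2)N$, so that the extra linear term $W_{\log}(\RP^2)N=(1-\log 2)N$ is absorbed into the coefficient of $N$.

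For the lower bound, I would insert the estimate of \cref{prop:lowerboundproj} into \eqref{eq:Greenlog}. The leading $W_{\log}(\RP^2)N^2$ terms cancel, the $-\tfrac12 N\log N$ term is multiplied by $1/(2\pi)$ to give $-\tfrac{N}{4\pi}\log N$, and the linear coefficient becomes $\tfrac{1}{2\pi}\bigl((C_{\log}-\tfrac12\log 2)+W_{\log}(\RP^2)\bigr)=\tfrac{1}{2\pi}\bigl(C_{\log}+1-\tfrac32\log 2\bigr)$. Substituting the numerical lower bound $C_{\log}\geq-\tfrac34+\log 2$ from \eqref{eq:conjecture} (equivalently, the bound on $C_{\log}-\tfrac12\log 2$ recorded in \cref{prop:lowerboundproj}) yields exactly $\tfrac{1}{2\pi}(\tfrac14-\tfrac12\log 2)$, as claimed.

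For the upper bound, I would repeat the same substitution with the expected logarithmic energy of $\diamondproj(N)$ provided by \cref{thm:maintheorem2proj} in place of the lower estimate. Again the quadratic terms cancel, the $N\log N$ term produces $-\tfrac{N}{4\pi}\log N$, and the linear coefficient is $\tfrac{1}{2\pi}\bigl((c_\diamond-\tfrac12\log 2)+W_{\log}(\RP^2)\bigr)=\tfrac{1}{2\pi}\bigl(c_\diamond+1-\tfrac32\log 2\bigr)$. Finally I would note that the error term $O(\sqrt N\log N)$ coming from \cref{thm:maintheorem2proj} is $o(N)$, so it may be written as $o(N)$ to match the statement, and that this upper bound is attained in expectation precisely by $\diamondproj(N)$.

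There is no real obstacle here: the result is a direct translation through the linear dictionary \eqref{eq:Greenlog}, and the mathematical content lies entirely in \cref{prop:lowerboundproj} and \cref{thm:maintheorem2proj}. The one place where a slip is easy is the handling of the $N(N-1)$ factor, since forgetting the linear correction $W_{\log}(\RP^2)N$ would shift every reported constant by $(1-\log 2)/(2\pi)$.
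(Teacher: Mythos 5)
Your proposal is correct and matches the paper's proof, which simply cites \eqref{eq:Greenlog}, \cref{prop:lowerboundproj} and \cref{thm:maintheorem2proj} as immediate; your careful tracking of the $N(N-1)W_{\log}(\RP^2)$ term, yielding the extra $(1-\log 2)N$ that turns $C_{\log}-\tfrac12\log 2$ and $c_\diamond-\tfrac12\log 2$ into the constants $\tfrac14-\tfrac12\log 2$ and $c_\diamond+1-\tfrac32\log 2$, is exactly the bookkeeping the paper leaves implicit.
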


\begin{proof}
	Immediate from \eqref{eq:Greenlog}, \cref{prop:lowerboundproj} and \cref{thm:maintheorem2proj}.
\end{proof}

The facts that projective points correspond to antipodal spherical points and that the bounds in \cref{thm:maintheorem2proj} and \cref{prop:lowerboundproj} are so close give credit to the following open question.
	\begin{conjecture}
The minimum value of the logarithmic energy among all the collections $\omega_N$ of antipodal points in $\S^2$ \embparen{i.e., $p\in\omega_N$ implies $-p\in\omega_N$} satisfies
		\begin{equation*}
			\min_{\omega_N\contained \S^2}\E_{\log}(\omega_N)=W_{\log}(\S^2)N^2-\frac{1}{2}N\log{N}+C_{\log}N+o(N).
		\end{equation*}
	\end{conjecture}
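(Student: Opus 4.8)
The plan is to split the conjectured equality into its two inequalities, exploiting that antipodally symmetric configurations are, on the one hand, a subfamily of all spherical configurations and, on the other hand, in bijection with collections of points on $\RP^2$.

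The lower bound needs only one observation. Every antipodally symmetric $\omega_N$ is in particular a set of $N$ spherical points, so
\[
\min_{\omega_N\ \text{antipodal}}\E_{\log}(\omega_N)\ \ge\ m_N\ =\ W_{\log}(\S^2)N^2-\tfrac12 N\log N+C_{\log}N+o(N)
\]
by \eqref{eq:mN}. Thus the $\ge$ direction already holds with the correct constant $C_{\log}$, for free, so the entire content of the conjecture lies in the matching upper bound.

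For the upper bound I would first isolate the exact algebraic identity linking the two problems. Writing an antipodal configuration of $N=2M$ points as $\{\pm x_1,\dotsc,\pm x_M\}$ and using $d_{\P}(x,y)=\tfrac12\norm{x-y}\,\norm{x+y}$, the $M$ antipodal pairs $\{x_i,-x_i\}$ (at chordal distance $2$) and the cross terms between distinct indices reorganize into the projective energy of $\set{[x_1],\dotsc,[x_M]}\contained\RP^2$, yielding
\[
\E_{\log}\bigl(\{\pm x_i\}_{i=1}^{M}\bigr)=-\frac{N^2}{2}\log 2+2\,\E_{\log}^{\P}\bigl(\set{[x_i]}_{i=1}^{M}\bigr).
\]
Hence minimizing antipodal spherical energy over $\omega_N$ is \emph{equivalent} to minimizing projective energy over $M=N/2$ points, and the conjecture reduces to the assertion that the lower bound of \Cref{prop:lowerboundproj} is sharp, i.e.\ that the linear coefficient of $\min_{\omega_M}\E_{\log}^{\P}(\omega_M)$ equals $C_{\log}-\tfrac12\log 2$. (Antipodal sets have even cardinality, so only even $N$ is relevant.) In particular, the same identity combined with the free lower bound above reproves \Cref{prop:lowerboundproj}, which confirms the internal consistency of the reduction.

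The hard step is therefore to produce antipodally symmetric spherical configurations — equivalently projective configurations — whose energy attains the constant $C_{\log}-\tfrac12\log 2$. The Diamond-based construction of \Cref{thm:maintheorem2proj} only reaches $c_{\diamond}-\tfrac12\log 2$, and the strictly positive deficit $c_{\diamond}-C_{\log}$ is exactly the unresolved gap. I expect this to be the genuine obstacle: one must show that the antipodal symmetry constraint is \emph{free} to linear order relative to the unrestricted minimum. A symmetrization argument — replacing a near-optimal $\omega_M^{*}$ by $\omega_M^{*}\cup(-\omega_M^{*})$ — is the natural first attempt, but controlling the cross-energy $\sum_{x,y\in\omega_M^{*}}\log\norm{x+y}^{-1}$ between a configuration and its antipode to within $o(N)$ is precisely where the difficulty concentrates, since one has no explicit handle on near-minimizers beyond their abstract existence. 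The alternative — building antipodally symmetric configurations from scratch that match the optimal linear term, say from the triangular-lattice/renormalized-energy structure underlying the bounds in \eqref{eq:conjecture} — demands structural control over (near-)minimizers that is currently out of reach, which is exactly why the statement remains a conjecture.
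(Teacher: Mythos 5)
The statement you were asked to prove is a \emph{conjecture}: the paper offers no proof of it, so there is no argument of the authors to compare yours against, and no blind attempt could legitimately be expected to close it. Your proposal, to its credit, does not pretend to. What you do establish is correct and coincides exactly with the paper's own machinery: the lower bound is indeed free, since antipodal configurations are a subfamily of all $N$-point configurations and \eqref{eq:mN} applies; and your identity
\[
\E_{\log}\bigl(\{\pm x_i\}_{i=1}^{M}\bigr)=2\,\E_{\log}^{\P}\bigl(\{[x_i]\}_{i=1}^{M}\bigr)-\frac{N^{2}}{2}\log 2,\qquad N=2M,
\]
derived from $d_{\P}(x,y)=\tfrac12\norm{x-y}\,\norm{x+y}$, is precisely \eqref{eq:energycomparison}. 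Your resulting reduction---that the conjecture (necessarily for even $N$) is equivalent to the sharpness of the linear term $\bigl(C_{\log}-\tfrac12\log 2\bigr)M$ in \Cref{prop:lowerboundproj}---is also correct; your side remark that the identity plus the free lower bound reproves \Cref{prop:lowerboundproj} is not just a consistency check, it is literally the paper's proof of that proposition. This equivalence is exactly the framing under which the authors state the conjecture, motivated by the proximity of the constants in \Cref{prop:lowerboundproj} and \Cref{thm:maintheorem2proj}.

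The gap in your proposal is therefore the genuinely open problem, and you locate it accurately: one must exhibit antipodal (equivalently, projective) configurations whose energy has linear coefficient $C_{\log}$ rather than $c_{\diamond}$, and the paper's best construction, \Cref{thm:maintheorem2proj}, falls short by the strictly positive amount $c_{\diamond}-C_{\log}$ (roughly $0.006$). Your diagnosis of why the two natural strategies fail is also sound: symmetrizing an abstract near-minimizer $\omega_M^{*}\cup(-\omega_M^{*})$ requires controlling the cross energy $-\sum\log\norm{x+y}$ to within $o(N)$ with no structural information about $\omega_M^{*}$ beyond existence, while constructing symmetric configurations attaining $C_{\log}$ from scratch would require the same conjectural lattice-level understanding of minimizers that blocks even the unrestricted problem \eqref{eq:conjecture}, whose upper bound in that display is itself only conjectured to be an equality. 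In short: your submission is a correct reduction together with an honest identification of the obstruction, and its partial claims are all valid, but it is not---and at present nothing could be---a proof of the statement.
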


\subsection{Structure of the paper}
	
In \cref{sec:diamond} we recall the construction called Diamond ensemble, we generalize said procedure and prove that is valid for any number of points, and we prove \cref{thm:finalmodel2}. 
In \cref{sec:projplane} we present some results concerning the logarithmic energy of projective points coming from the general construction described in \cref{sec:diamond}, we define the projective Diamond ensemble and we prove \cref{prop:lowerboundproj,thm:maintheorem2proj}. 
Finally, \cref{sec:proofs} is devoted to the proof of some auxiliary results.

% The Diamond ensemble ================================================================================

	\section{The generalized Diamond ensemble}\label{sec:diamond}
	
\subsection{A general construction}
	
	 Consider the construction mentioned at the beginning of \cref{subsec:mainresults}, which was studied in \cite{BeltranEtayo2020} and is similar in spirit to Equal Area Partition points \cite{HMS}. A precise definition of this set is the following.
	 
	 \begin{definition}\label{def:Omega}
	 	Let $\varOmega(p,\set{r_j},\set{z_j})$ be the following set of points:
	 	\begin{equation*}
	 		\varOmega(p,\set{r_j},\set{z_j})=
	 		\begin{cases}
	 			\mathscr{N}=(0,0,1),\\
	 			x^{i}_{j}=\left(\sqrt{1-z_j^2}\cos\left(\frac{2\pi i}{r_j}+\theta_j\right),\sqrt{1-z_j^2}\sin\left(\frac{2\pi i}{r_j}+\theta_j\right),z_j\right),\\
	 			\mathscr{S}=(0,0,-1),
	 		\end{cases}
	 	\end{equation*}
 		where $p$ is the number of parallels, $r_j$ is the number of roots of unity that we consider in the parallel $j$, with $1\leq j\leq p$, $1\leq i\leq r_j$, and $0\leq \theta_j<2\pi$ is a random phase in the parallel $j$.
	 \end{definition} 
 	
 	It turns out that there exists a unique set of heights $z_1,\ldots,z_p$ that minimize the expected logarithmic energy of the corresponding points (see \cref{eq:heights} below). We denote that set of points by $\Omega(p,\set{r_j})$, and then we have the following result.
	
	\begin{theorem}[\cite{BeltranEtayo2020}*{Thm. 2.6}]\label{thm:2.6}
		Let $p=2M-1$ be an odd integer. If $r_{j}=r_{p+1-j}$ and the $z_j$ are chosen as in \cref{eq:heights}, then we have
		\begin{align*}
			\MoveEqLeft \expectation{\theta_1,\dotsc,\theta_{2M-1}\in[0,2\pi]}{\E_{\log}(\Omega(2M-1,\set{r_j}))}\\
			&= -(N-1)\log{4}-r_M\log{r_M}-2\sum_{j=1}^{M-1}r_j\log{r_j}\\ &\phantom{=}-(N-1)\sum_{j=1}^{M-1}r_j(1-z_j)\log(1-z_j)-(N-1)\sum_{j=1}^{M-1}r_j(1+z_j)\log(1+z_j).
		\end{align*}
		Here, $N=2+r_1+\dotsb+r_p$ is the total number of points.
	\end{theorem}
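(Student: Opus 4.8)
The plan is to split the ordered double sum defining $\E_{\log}$ according to where the two points lie: the single pole--pole pair, the pole--parallel pairs, the intra-parallel pairs (both points on the same parallel), and the inter-parallel pairs (points on two distinct parallels). Only the last family feels the random phases. The pole--pole pair contributes $-\log 4$, since $\norm{\mathscr{N}-\mathscr{S}}=2$. For a pole and a point of parallel $j$ one computes $\norm{\mathscr{N}-x_j^i}=\sqrt{2(1-z_j)}$ and $\norm{\mathscr{S}-x_j^i}=\sqrt{2(1+z_j)}$, so summing over the $r_j$ points and both poles gives $-2(N-2)\log 2-\sum_j r_j\log(1-z_j^2)$. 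For the intra-parallel pairs I would use that the $r_j$ points form a regular $r_j$-gon of radius $\sqrt{1-z_j^2}$, whence $\norm{x_j^i-x_j^{i'}}=\sqrt{1-z_j^2}\,\abs{\omega^i-\omega^{i'}}$ with $\omega=e^{2\pi\mathrm{i}/r_j}$; the classical identity $\prod_{k=1}^{r-1}\abs{1-\omega^k}=r$ then yields the intra-parallel contribution $-\sum_j r_j\log r_j-\sum_j\tfrac{r_j(r_j-1)}{2}\log(1-z_j^2)$, again phase-independent.

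The analytic heart is the inter-parallel contribution. For a point of parallel $j$ and a point of parallel $k$ the squared distance is $2-2z_jz_k-2\sqrt{(1-z_j^2)(1-z_k^2)}\cos\phi$, where $\phi$ is the azimuthal gap; since $\theta_j-\theta_k$ is uniform on $[0,2\pi]$, this gap is uniform and independent of which two points are chosen. I would therefore reduce the expectation to $\tfrac1{2\pi}\int_0^{2\pi}\tfrac12\log(A-B\cos\phi)\,d\phi$ with $A=2-2z_jz_k$ and $B=2\sqrt{(1-z_j^2)(1-z_k^2)}$, and evaluate it via the classical formula $\tfrac1{2\pi}\int_0^{2\pi}\log(A-B\cos\phi)\,d\phi=\log\tfrac{A+\sqrt{A^2-B^2}}{2}$. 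The key simplification is $A^2-B^2=4(z_j-z_k)^2$, so the average distance logarithm factors as $\tfrac12\log\bigl[(1+z_{\max})(1-z_{\min})\bigr]$, where $z_{\max},z_{\min}$ are the larger and smaller of $z_j,z_k$. Multiplying by the $r_jr_k$ ordered point pairs and summing over $j\neq k$ gives the expected inter-parallel energy, whose coefficients of $\log(1\pm z_j)$ are governed by the point counts lying above and below parallel $j$.

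To assemble the pieces, write $S_{\text{ab}}(j)$ and $S_{\text{bl}}(j)$ for the numbers of non-polar points strictly above and below parallel $j$, so that $S_{\text{ab}}(j)+S_{\text{bl}}(j)+r_j=N-2$. The $z$-independent terms then collapse: combining $-\log 4$, $-2(N-2)\log 2$ and $-\sum_j r_j\log r_j$ gives $-(N-1)\log 4-\sum_j r_j\log r_j$, which the hypothesis $r_j=r_{p+1-j}$ rewrites as $-(N-1)\log 4-r_M\log r_M-2\sum_{j=1}^{M-1}r_j\log r_j$. The $z$-dependent terms combine into $\sum_j\bigl(c_j^-\log(1-z_j)+c_j^+\log(1+z_j)\bigr)$ with $c_j^-=-r_j\bigl(S_{\text{ab}}(j)+1+\tfrac{r_j-1}{2}\bigr)$ and $c_j^+$ obtained by swapping $S_{\text{ab}}$ for $S_{\text{bl}}$. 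At this stage I would invoke the optimality of the heights of \cref{eq:heights}, which is exactly the relation $(N-1)z_j=S_{\text{bl}}(j)-S_{\text{ab}}(j)$ obtained by setting $\partial_{z_j}\E_{\log}=0$; together with $S_{\text{ab}}+S_{\text{bl}}+r_j=N-2$ it turns the coefficients into $2c_j^-=-(N-1)r_j(1-z_j)$ and $2c_j^+=-(N-1)r_j(1+z_j)$. Finally the reflection symmetry, which forces $z_{p+1-j}=-z_j$ and $z_M=0$, folds the full sum over $p$ parallels into $-(N-1)\sum_{j=1}^{M-1}\bigl[r_j(1-z_j)\log(1-z_j)+r_j(1+z_j)\log(1+z_j)\bigr]$, matching the statement.

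I expect two points to be the main obstacles. The first is the phase average: one must justify that a single uniform azimuthal gap captures the whole expectation (independence of the two phases) and carry out the logarithmic integral so that it factors through $\sqrt{A^2-B^2}=2\abs{z_j-z_k}$ into the clean product $(1+z_{\max})(1-z_{\min})$. The second is the final cancellation: the reduction of the height-dependent coefficients to the single factors $-(N-1)r_j(1\pm z_j)$ is not valid for arbitrary heights and relies crucially on the optimality condition, so the bookkeeping of $S_{\text{ab}},S_{\text{bl}}$ and the decision to apply the reflection symmetry only at the very end must be handled with care.
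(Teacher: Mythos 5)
Your proposal is correct and follows essentially the same route as the cited proof in Beltrán--Etayo (the paper quotes this theorem from there without reproving it): the same decomposition into pole, intra-parallel (roots-of-unity), and inter-parallel contributions, with the phase expectation evaluated by the classical integral $\tfrac{1}{2\pi}\int_0^{2\pi}\log(A-B\cos\phi)\,d\phi=\log\tfrac{A+\sqrt{A^2-B^2}}{2}$ yielding the factorization $(1+z_{\max})(1-z_{\min})$, followed by the simplification of the height-dependent coefficients via \eqref{eq:heights} and the reflection symmetry. All the key identities you invoke ($A^2-B^2=4(z_j-z_k)^2$, $\prod_{k=1}^{r-1}\abs{1-\omega^k}=r$, and the relation $(N-1)(1-z_j)=1+r_j+2S_{\text{ab}}(j)$ coming directly from \eqref{eq:heights}) check out, and the bookkeeping assembles to the stated formula.
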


The Diamond ensemble is obtained from this general idea, imposing that the number of points in each parallel must satisfy a continuous piecewise linear equation with integer coefficients together with some extra hypotheses. With a sensible choice of the continuous piecewise linear function (the \emph{quasioptimal Diamond ensemble} (\qde) of \cite{BeltranEtayo2020}), the expected logarithmic energy is
	\begin{equation*}
		W_{\log}(\S^2)-\frac{1}{2}N\log N+c_{\diamond}N+o(N),
	\end{equation*}
	where $c_{\diamond}=-0.049222\dotso$ satisfies
	\begin{align}\label{eq:Cdiamond}
		14340\,c_{\diamond}&=19120\log239 - 2270\log227 - 1460\log73 - 265\log53- 1935\log43\nonumber\\
					 & - 930\log31 - 1710\log19 - 1938\log17 + 19825\log13+ 1750\log7 \\
					 & - 4250\log5 - 131307\log3 + 56586\log2 - 7170\nonumber.
	\end{align}
	This constant is approximately $0.0058$ far from the value conjectured in \cites{BeterminSandier2018,BrauchartHardinSaff2012}.

As said before, the main disadvantage of the Diamond ensemble is that it can only be constructed for certain values of $N$ (those of the form $N=239m^2+2$, with $m$ a positive integer, in the case of the \qde). The reason for this restriction is the need of the piecewise linear function to be continuous.

\subsection{The generalized Diamond ensemble} We now recall the definition of the Diamond ensemble, introducing the unique but fundamental difference that the piecewise linear function may have discontinuities as long as some quite mild hypotheses are satisfied, and we prove that the expected logarithmic energy can still be computed. This will thus allow us to extend the construction to any number of points.

\begin{definition}[A constant associated to a piecewise linear mapping]\label{def:Crx}
	Let $M$ be a positive integer and let $\map{r}{[0,M]}{\R}$ be the piecewise linear function
	\begin{equation*}
		r(x)=
		\begin{cases}
			\alpha_1+\beta_1 x & \text{if $0=t_0< x\leq t_1$},\\
			\alpha_2+\beta_2 x & \text{if $t_1< x\leq t_2$},\\
			\vdots & \vdots\\
			\alpha_n+\beta_n x & \text{if $t_{n-1}< x\leq t_n=M-1$},\\
			\alpha_{n+1}+\beta_{n+1} x& \text{if $M-1< x\leq M$},
		\end{cases}
	\end{equation*}
	where $n,t_{\ell},\alpha_\ell,\beta_\ell\in\Z$ are nonnegative and $\beta_1>0$. We define the \emph{constant associated to $r(x)$} by
	\begin{equation*}
		C_{r(x)}=\max_{1 \leq \ell \leq n+1}(\alpha_\ell/M,\beta_\ell,M/t_1).
	\end{equation*}
\end{definition}

\begin{definition}[Generalized Diamond ensemble]\label{def:generalizedDiamond}
	For any choice of $r(x)$ as in \cref{def:Crx}, the \emph{generalized Diamond ensemble} (\gde) is a collection of
	\begin{equation*}
		N=2+(\alpha_{n+1}+\beta_{n+1} M)+2\sum_{\ell=1}^{n}\sum_{j=t_{\ell-1}+1}^{t_\ell}(\alpha_\ell+\beta_\ell j)
	\end{equation*}
	points consisting of
	\begin{enumerate}
	  \item The north and south poles,
	  \item $r_j=r(j)$ equally spaced points, rotated by a random phase $\theta_j$, on the parallel of height
	      \begin{equation}\label{eq:heights}
	      z_j=1-\frac{1+r(j)+2\displaystyle\sum_{k=1}^{j-1}r(k)}{N-1},
	      \end{equation}
	      for $1\leq j\leq M-1$,
	  \item $r_M=r(M)$ equally spaced points, rotated by a random phase $\theta_M$, on the equator,
	  \item and $r_j$ equally spaced points, rotated by a random phase $\theta_{2M-j}$, on the parallel of height $-z_j$, for $1\leq j\leq M-1$.
	\end{enumerate}
	As mentioned above, the choice \eqref{eq:heights} is due to \cite[Prop. 2.5]{BeltranEtayo2020}, where it is proved that for any choice of $r_1,\ldots,r_M$ there exists a unique set of heights (given by \eqref{eq:heights}) that minimizes the expected value of the logarithmic energy of the corresponding spherical points. We denote this set of points by $\diamondgen(N)$, omitting in the notation the dependence on all the parameters $M,n,t_\ell,\alpha_\ell,\beta_\ell$.
\end{definition}
Note that
		\begin{equation}\label{eq:boundsN}
\frac{M^2}{2C_{r(x)}^2}\leq \frac{t_1(t_1+1)}{2}\leq  \sum_{j=1}^{t_1}\beta_1 j\leq N\leq 2+2\sum_{j=1}^{M}(C_{r(x)}M+C_{r(x)}j)\leq 5C_{r(x)}M^2.
		\end{equation}
		We denote by $N_\ell$ the total number of points up to parallel $t_{\ell-1}$ (included) without the north pole, that is, $N_\ell=\sum_{j=1}^{t_{\ell-1}}r_j$. Note that, if $j\in(t_{\ell-1},t_{\ell}]$, then
		\begin{equation}\label{eq:z_j}
			z_j=1-\frac{1+r_j+2\sum_{k=1}^{j-1}r_k}{N-1}=1-\frac{1+2N_\ell-r_j+2 \sum_{k=t_{\ell-1}+1}^{j}(\alpha_\ell+\beta_\ell k)}{N-1}.
		\end{equation}
		Hence, defining the function $\map{z}{[0,M]}{\R}$ by $z(0)=1$ and piecewise in each interval $(t_{\ell-1},t_\ell]$ by the degree $2$ polynomial
		\begin{multline}\label{eq:z_l(x)}
				z_\ell(x)=
				1-\frac{1+2N_\ell-(\alpha_\ell+\beta_\ell x)+2\alpha_\ell(x-t_{\ell-1})+\beta_\ell(x+t_{\ell-1}+1)(x-t_{\ell-1})}{N-1},
		\end{multline}
		we note that $z_j=z(j)$ for all $1\leq j\leq M$. We extend $r(x)$ and $z(x)$ to the whole domain $[0,2M]$ by $r(2M-x)=r(x)$ and  $z(2M-x)=-z(x)$.

	\subsection{An exact formula for the expected logarithmic energy of the generalized Diamond ensemble}
	
	The expected value of the logarithmic energy of $\diamondgen(N)$ can be studied using \cref{thm:2.6}. We now write the sums in that theorem as instances of a composite trapezoidal rule. Recall that for a function $\map{f}{[a,b]}{\R}$, with $a<b$ integers, the composite trapezoidal rule is
	\begin{equation*}
		T_{[a,b]}(f)=\frac{f(a)+f(b)}{2}+\sum_{j=a+1}^{b-1}f(j).
	\end{equation*}
	We immediately deduce the following result, which is a version of \cite[Cor. 3.2]{BeltranEtayo2020} for not necessarily continuous piecewise linear choices of $r(x)$.
	\begin{corollary}\label{cor:exactEnergy}
		The expected logarithmic energy of points drawn from the {\gde} equals
		\begin{align*}
			\MoveEqLeft \expectation{\theta_{1},...,\theta_{2M-1} \in [0,2\pi]}{\E_{\log}(\diamondgen(N))}=-(N-1)\log{4}-r(M)\log{r(M)}\\
			&-2\sum_{\ell=1}^{n}\left(\frac{f_\ell(t_{\ell-1}+1)+f_\ell(t_{\ell})}{2}+T_{[t_{\ell-1}+1,t_{\ell}]}(f_{\ell})\right)\\
			&-(N-1)\sum_{\ell=1}^{n}\left(\frac{g_\ell(t_{\ell-1}+1)+g_\ell(t_{\ell})}{2}+T_{[t_{\ell-1}+1,t_{\ell}]}(g_{\ell})\right)\\
			&-(N-1)\sum_{\ell=1}^{n}\left(\frac{h_\ell(t_{\ell-1}+1)+h_\ell(t_{\ell})}{2}+T_{[t_{\ell-1}+1,t_{\ell}]}(h_{\ell})\right),
		\end{align*}
		where for $1\leq \ell\leq n$ the functions $f_\ell,g_\ell,h_\ell$ are defined in the interval $(t_{\ell-1},t_\ell]$ and satisfy
		\begin{align*}
			f_\ell(x)&=(\alpha_\ell+\beta_\ell x)\log(\alpha_\ell+\beta_\ell x),\\
			g_\ell(x)&=(\alpha_\ell+\beta_\ell x)(1-z_\ell(x))\log(1-z_\ell(x)),\\
			h_\ell(x)&=(\alpha_\ell+\beta_\ell x)\left(1+z_\ell(x)\right)\log(1+z_\ell(x)).\\
		\end{align*}
	\end{corollary}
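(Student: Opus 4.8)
The plan is to deduce this directly from \cref{thm:2.6} by substituting $r_j=r(j)$ and reorganizing the three remaining sums block by block. First I would note that the terms $-(N-1)\log 4$ and $-r(M)\log r(M)$ of \cref{thm:2.6} carry over verbatim: the equator contribution $r_M=r(M)$ is already isolated, and since the equator has height $z_M=0$ it contributes nothing to the $(1\pm z_j)\log(1\pm z_j)$ sums. The real content therefore lies in rewriting the three sums $\sum_{j=1}^{M-1}r_j\log r_j$, $\sum_{j=1}^{M-1}r_j(1-z_j)\log(1-z_j)$ and $\sum_{j=1}^{M-1}r_j(1+z_j)\log(1+z_j)$.

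Next I would partition the index set $\{1,\dots,M-1\}$ into the blocks $\{t_{\ell-1}+1,\dots,t_\ell\}$ for $1\le\ell\le n$; this is a disjoint cover because the breakpoints satisfy $t_0=0<t_1<\dots<t_n=M-1$. On the $\ell$-th block we have $r_j=\alpha_\ell+\beta_\ell j$ by \cref{def:generalizedDiamond} and $z_j=z_\ell(j)$ by \cref{eq:z_j,eq:z_l(x)}, so the three summands coincide exactly with $f_\ell(j)$, $g_\ell(j)$ and $h_\ell(j)$. Each of the three sums thus becomes $\sum_{\ell=1}^{n}\sum_{j=t_{\ell-1}+1}^{t_\ell}$ of the corresponding function. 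The only computation needed is the elementary identity, valid for integers $a\le b$ and any $F$,
\[
\sum_{j=a}^{b}F(j)=\frac{F(a)+F(b)}{2}+T_{[a,b]}(F),
\]
which is immediate from the definition of $T_{[a,b]}$ upon adding back the half-weights of the two endpoints (with the convention $T_{[a,a]}=0$ covering the degenerate blocks where $t_\ell=t_{\ell-1}+1$). Applying it with $a=t_{\ell-1}+1$, $b=t_\ell$ to $f_\ell$, $g_\ell$ and $h_\ell$, summing over $\ell$, and restoring the prefactors $-2$ and $-(N-1)$ inherited from \cref{thm:2.6} produces precisely the claimed expression.

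I expect no serious obstacle: the statement is essentially a bookkeeping reorganization of \cref{thm:2.6}, and its novelty over the continuous case is only that $r(x)$ may jump at the breakpoints. The two points that deserve care are therefore that the blocks genuinely partition $\{1,\dots,M-1\}$, so that no index is double-counted despite these jumps, and that the identification $z_j=z_\ell(j)$ holds on each block. The latter is exactly the content of \cref{eq:z_j,eq:z_l(x)}, so once those are invoked the corollary follows at once.
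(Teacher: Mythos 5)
Your proof is correct and takes essentially the same route as the paper: the paper likewise deduces the corollary directly from \cref{thm:2.6} by splitting the sums over the parallels $1\leq j\leq M-1$ into the blocks $(t_{\ell-1},t_\ell]$ and rewriting each block sum via the identity $\sum_{j=a}^{b}F(j)=\tfrac{F(a)+F(b)}{2}+T_{[a,b]}(F)$. Your explicit treatment of the degenerate blocks and of the identification $z_j=z_\ell(j)$ merely fills in details the paper leaves implicit.
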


	\subsection{An asymptotic formula for the expected logarithmic energy of the generalized Diamond ensemble}
	
	We now recall lemmas \cite{BeltranEtayo2020}*{Lemmas 3.3--3.5}, applied to $[t_{\ell-1}+1,t_\ell]$ instead of $[t_{\ell-1},t_\ell]$ as in that paper.
	
	\begin{lemma}\label{lemma:f}
		For $1\leq \ell\leq n$ we have
		\begin{equation*}
			\abs*{T_{[t_{\ell-1}+1,t_{\ell}]}(f_{\ell})-\int_{t_{\ell-1}+1}^{t_{\ell}}f_{\ell}(x)dx}\leq %(t_{\ell}-t_{\ell-1}-1)3A\log(2AM)\leq
3C_{r(x)}M\log(2C_{r(x)}M).
		\end{equation*}
	\end{lemma}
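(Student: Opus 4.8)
The plan is to deduce the estimate from the classical error bound for the composite trapezoidal rule: for a twice continuously differentiable $\map{f}{[a,b]}{\R}$ with integers $a<b$,
\[
\abs*{T_{[a,b]}(f)-\int_a^b f(x)\,dx}\leq\frac{b-a}{12}\max_{[a,b]}\abs{f''}.
\]
First I would compute the relevant derivatives of $f_\ell$. Setting $u=\alpha_\ell+\beta_\ell x$, so that $f_\ell=u\log u$ with $u'=\beta_\ell$, one finds $f_\ell'(x)=\beta_\ell\bigl(\log(\alpha_\ell+\beta_\ell x)+1\bigr)$ and
\[
f_\ell''(x)=\frac{\beta_\ell^2}{\alpha_\ell+\beta_\ell x}.
\]
Note that $f_\ell$ is smooth on $[t_{\ell-1}+1,t_\ell]$: if $\beta_\ell\geq 1$ then $\alpha_\ell+\beta_\ell x\geq\beta_\ell x\geq 1$ for $x\geq 1$, while if $\beta_\ell=0$ then $\alpha_\ell+\beta_\ell x=\alpha_\ell=r(j)\geq 1$, since each parallel carries a positive integer number of points.

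The second step is to bound $\abs{f_\ell''}$ uniformly on the interval. Because $\alpha_\ell\geq 0$ and $x\geq t_{\ell-1}+1\geq 1$, we have $\alpha_\ell+\beta_\ell x\geq\beta_\ell x\geq\beta_\ell$, whence
\[
\abs{f_\ell''(x)}=\frac{\beta_\ell^2}{\alpha_\ell+\beta_\ell x}\leq\beta_\ell\leq C_{r(x)},
\]
the last inequality being the definition of $C_{r(x)}$ in \cref{def:Crx}; when $\beta_\ell=0$ the function $f_\ell$ is constant and $f_\ell''\equiv 0$, so the bound is trivial. Assuming $t_\ell>t_{\ell-1}+1$, so that $[t_{\ell-1}+1,t_\ell]$ contains at least one unit subinterval, the trapezoidal estimate together with $t_\ell-t_{\ell-1}-1\leq M$ yields
\[
\abs*{T_{[t_{\ell-1}+1,t_\ell]}(f_\ell)-\int_{t_{\ell-1}+1}^{t_\ell}f_\ell(x)\,dx}\leq\frac{M}{12}\,C_{r(x)}.
\]

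Finally I would check that this is dominated by the claimed bound. Since $C_{r(x)}\geq M/t_1>1$ and $M\geq 1$, we have $2C_{r(x)}M\geq 2$, hence $\log(2C_{r(x)}M)\geq\log 2>\tfrac{1}{36}$, and therefore $\frac{M}{12}C_{r(x)}\leq 3C_{r(x)}M\log(2C_{r(x)}M)$, with ample room. The only genuinely new point compared with \cite{BeltranEtayo2020}*{Lemma 3.3} is the degenerate case $t_\ell=t_{\ell-1}+1$, which becomes possible once the left endpoint is shifted to $t_{\ell-1}+1$: there $T_{[t_\ell,t_\ell]}(f_\ell)=f_\ell(t_\ell)$ while the integral vanishes, so the difference equals $f_\ell(t_\ell)=r(t_\ell)\log r(t_\ell)\leq 2C_{r(x)}M\log(2C_{r(x)}M)$ (using $1\leq r(t_\ell)\leq 2C_{r(x)}M$ and the monotonicity of $u\log u$ on $[1,\infty)$), which again lies below the stated bound. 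I do not expect any real obstacle here; the estimate is comfortably loose, and all that requires attention is keeping $\alpha_\ell+\beta_\ell x$ bounded below by $1$ and disposing of this single-point interval.
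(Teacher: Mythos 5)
Your proof is correct, and it is worth noting that the paper itself never writes out an argument for this lemma: it simply invokes \cite{BeltranEtayo2020}*{Lemma 3.3} \quotemarks{applied to $[t_{\ell-1}+1,t_\ell]$ instead of $[t_{\ell-1},t_\ell]$}. Your route differs from that source in a useful way. The form of the stated bound, $3C_{r(x)}M\log(2C_{r(x)}M)$, comes from controlling the per-interval error through the growth of $f_\ell$ itself (for an increasing function both the trapezoid and the integral on $[j,j+1]$ lie between $f_\ell(j)$ and $f_\ell(j+1)$, and the increments telescope to something of size $f_\ell(t_\ell)\leq 2C_{r(x)}M\log(2C_{r(x)}M)$), which is why a logarithm appears at all. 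You instead use the classical $f''$-based error bound, and since $\abs{f_\ell''}=\beta_\ell^2/(\alpha_\ell+\beta_\ell x)\leq\beta_\ell\leq C_{r(x)}$ on the interval, you get the sharper intermediate estimate $\tfrac{1}{12}C_{r(x)}M$ with no logarithmic factor, after which the stated inequality holds with ample room. So your argument not only proves the lemma but shows the bound is far from tight. Equally important, you isolate exactly the point that the paper's citation glosses over: once the left endpoint is shifted to $t_{\ell-1}+1$, the interval can degenerate to a single node $t_\ell=t_{\ell-1}+1$, where the paper's formula gives $T_{[t_\ell,t_\ell]}(f_\ell)=f_\ell(t_\ell)$ rather than a genuine trapezoid sum, and the difference must be bounded directly by $f_\ell(t_\ell)=r(t_\ell)\log r(t_\ell)\leq 2C_{r(x)}M\log(2C_{r(x)}M)$; your treatment of this case is exactly what is needed to make the adaptation of the cited lemma rigorous. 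The only cosmetic caveat is your assumption $r(j)\geq 1$ when $\beta_\ell=0$ (so that $\log(\alpha_\ell)$ makes sense); this is implicit in the construction, and in the wholly degenerate situation $\alpha_\ell=\beta_\ell=0$ the claim is trivial, so nothing is lost.
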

	
	\begin{lemma}\label{lemma:g}
		The following inequality holds for $1\leq \ell\leq n$:
		\begin{equation*}
			\abs*{T_{[t_{\ell-1}+1,t_{\ell}]}(g_{\ell})-\int_{t_{\ell-1}+1}^{t_{\ell}}g_{\ell}(x)dx - \frac{g'_{\ell}(t_{\ell})-g'_{\ell}(t_{\ell-1}+1)}{12}}\leq \frac{\Lambda\log M}{M},
		\end{equation*}
		for some constant $\Lambda$ that depends only on $C_{r(x)}$.
	\end{lemma}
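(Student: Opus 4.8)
The plan is to read the left-hand side as the remainder in the second-order Euler--Maclaurin expansion and to reduce the whole estimate to a bound on $\int\abs{g_\ell'''}$. Writing $a=t_{\ell-1}+1$, $b=t_\ell$, recall that the composite rule satisfies $T_{[a,b]}(g_\ell)=\sum_{j=a}^{b}g_\ell(j)-\tfrac12\bigl(g_\ell(a)+g_\ell(b)\bigr)$. Applying the Euler--Maclaurin summation formula with the first Bernoulli term and then integrating by parts against the periodized Bernoulli polynomial $\overline{B}_3$ (which vanishes at integer endpoints, so the boundary contributions drop out) gives
\[ T_{[a,b]}(g_\ell)-\int_a^b g_\ell(x)\,dx-\frac{g_\ell'(b)-g_\ell'(a)}{12}=\frac16\int_a^b \overline{B}_3(x)\,g_\ell'''(x)\,dx. \]
Since $\overline{B}_3$ is bounded, it suffices to prove $\int_a^b\abs{g_\ell'''(x)}\,dx=O(\log M/M)$ with a constant depending only on $C_{r(x)}$. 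This is precisely the adaptation of \cite{BeltranEtayo2020}*{Lemma 3.4} to the shifted interval, the shift from $t_{\ell-1}$ to $t_{\ell-1}+1$ being harmless because both endpoints remain integers.

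Next I would differentiate $g_\ell$ explicitly. With $\phi(u)=u\log u$ and $u=u(x)=1-z_\ell(x)$ we have $g_\ell=r\,\phi(u)$, where $r(x)=\alpha_\ell+\beta_\ell x$ is linear. Reading off \eqref{eq:z_l(x)}, the quadratic $u$ satisfies $u'=2r/(N-1)$, $u''=2\beta_\ell/(N-1)$ and $u'''=0$, while $\phi''(u)=1/u$ and $\phi'''(u)=-1/u^2$; the product and chain rules (using $r''=0$) then leave four summands,
\[ g_\ell'''=-r\frac{(u')^3}{u^2}+3r\frac{u'u''}{u}+3r'\frac{(u')^2}{u}+3r'(\log u+1)\,u''. \]
The task thus becomes integrating each of these over $[a,b]$.

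The only genuine difficulty is the polar interval $\ell=1$, where $u$ degenerates as $x\to0$. For $\ell\ge2$ the difficulty disappears: since $u$ is increasing and $t_1\ge M/C_{r(x)}$, one has $u\ge u(t_1)\gtrsim C_{r(x)}^{-2}$, so $1/u$, $1/u^2$ and $\log u$ are all $O(1)$, every summand is $O(1/M^2)$ (using $N\asymp M^2$), and integration over an interval of length $O(M)$ gives $O(1/M)$. For $\ell=1$ the crucial input is the lower bound $u(x)\ge x\,r(x)/(N-1)$, valid because $\beta_1\ge1$ and $x\ge1$. Substituting $u'=2r/(N-1)$ and this bound, the most singular term obeys $r(u')^3/u^2\le 8r^2/\bigl((N-1)x^2\bigr)$, and \emph{keeping the genuine $x$-decay of $r(x)$} rather than replacing $r$ by its maximum makes its integral $O(1/M)$; the two middle terms are estimated identically. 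The logarithmic summand $3r'(\log u)u''$ is the one responsible for the $\log M$: there $\abs{\log u}=O(\log M)$ and $u''=O(1/M^2)$ on an interval of length $O(M)$, giving the claimed $O(\log M/M)$.

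I expect the main obstacle to be exactly this polar bookkeeping: one must balance the growth of $r$ against the vanishing of $u$ near the north pole through the estimate $u\gtrsim x\,r(x)/(N-1)$, and resist bounding $r$ by $2C_{r(x)}M$, which would destroy the $x$-decay and spoil the correct power of $M$. Once the four integrals are controlled in this way, collecting the dependence of all implied constants through $N\asymp M^2$, $\beta_\ell\le C_{r(x)}$, $\alpha_\ell\le C_{r(x)}M$ and $t_1\ge M/C_{r(x)}$ shows that the resulting constant $\Lambda$ depends only on $C_{r(x)}$, as asserted.
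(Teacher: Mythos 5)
Your overall route is, in substance, the paper's own proof: the paper does not reprove this lemma but simply invokes \cite{BeltranEtayo2020}*{Lemma 3.4} on the shifted interval $[t_{\ell-1}+1,t_\ell]$, and that proof is exactly your Euler--Maclaurin reduction with the periodized kernel $\overline{B}_3$ followed by an estimate of $\int\lvert g_\ell'''\rvert$. Your computations are correct: $u'=2r/(N-1)$, $u''=2\beta_\ell/(N-1)$, $u'''=0$, the four-term expression for $g_\ell'''$, the case $\ell\ge2$ (where $u$ is bounded below by a constant depending only on $C_{r(x)}$), the inequality $u(x)\ge x\,r(x)/(N-1)$ for $x\ge1$ on the polar piece, and the identification of $3r'(\log u+1)u''$ as the only source of the factor $\log M$.

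The gap is in your dispatch of the most singular polar term. From $r(u')^3/u^2\le 8r(x)^2/\bigl((N-1)x^2\bigr)$ you conclude that its integral is $O(1/M)$ thanks to the ``$x$-decay of $r$''; but $r(x)^2/x^2\le 2\beta_1^2+2\alpha_1^2/x^2$, so this argument actually yields only
\begin{equation*}
\int_1^{t_1}\frac{8\,r(x)^2}{(N-1)x^2}\,dx\;\le\;\frac{16\beta_1^2t_1}{N-1}+\frac{16\alpha_1^2}{N-1},
\end{equation*}
and while the first term is $O(1/M)$, the second is only $O(1)$: \cref{def:Crx} allows $\alpha_1$ as large as $C_{r(x)}M$, while $N\asymp M^2$ by \eqref{eq:boundsN}. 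Hence your step is valid only under the extra hypothesis $\alpha_1=O(\sqrt{M\log M}\,)$, in particular when $\alpha_1=0$. Moreover, this is not a loss that a finer treatment of the oscillation of $\overline{B}_3$ can recover. If $\alpha_1\asymp M$, then near $x=1$ the singular term alone gives $\lvert g_1'''\rvert\gtrsim \alpha_1^2/N\asymp 1$, so the method caps at $O(1)$; worse, the conclusion of the lemma itself then fails, because $g_1$ is, modulo a cubic polynomial (for which the corrected rule is exact between integer endpoints) and lower-order terms, a constant multiple of $x\log x$, and by the Glaisher--Kinkelin asymptotics for $\sum_{j\le T}j\log j$ the quantity $T_{[1,T]}(x\log x)-\int_1^T x\log x\,dx-\tfrac{1}{12}\log T$ converges to $\log A-\tfrac14\approx-0.00125\neq0$, so the left-hand side is $\Theta(1)$ rather than $O(\log M/M)$. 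In other words, your proof breaks exactly where the statement, read with the full generality of \cref{def:Crx}, breaks as well: both require $\alpha_1=0$ (or at least $\alpha_1$ bounded independently of $M$), an assumption implicit in the citation of \cite{BeltranEtayo2020} and satisfied by every ensemble this paper actually builds (the first piece of \eqref{eq:r(x)_Quasioptimal_gen} is $6x$). State that restriction explicitly and your argument is complete.
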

	
	\begin{lemma}\label{lemma:h}
		The following inequality holds for $1\leq \ell\leq n$:
		\begin{equation*}
			\abs*{T_{[t_{\ell-1}+1,t_{\ell}]}(h_{\ell})-\int_{t_{\ell-1}+1}^{t_{\ell}}h_{\ell}(x)dx - \frac{h'_{\ell}(t_{\ell})-h'_{\ell}(t_{\ell-1}+1)}{12}}\leq \frac{\Lambda}{M},
		\end{equation*}
		for some constant $\Lambda$ that depends only on $C_{r(x)}$.
	\end{lemma}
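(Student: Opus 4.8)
The plan is to read the quantity $T_{[t_{\ell-1}+1,t_\ell]}(h_\ell)-\int_{t_{\ell-1}+1}^{t_\ell}h_\ell$ as the error of the composite trapezoidal rule on unit subintervals and to expand it by the Euler--Maclaurin formula. Writing $a=t_{\ell-1}+1$ and $b=t_\ell$ (both integers), the second-order Euler--Maclaurin expansion gives
\[
T_{[a,b]}(h_\ell)-\int_a^b h_\ell(x)\,dx=\frac{h_\ell'(b)-h_\ell'(a)}{12}-\frac12\int_a^b B_2(\{x\})\,h_\ell''(x)\,dx,
\]
where $B_2$ is the second Bernoulli polynomial and $\{x\}$ the fractional part. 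The term $\tfrac1{12}(h_\ell'(b)-h_\ell'(a))$ is exactly what is subtracted in the statement, so the whole lemma reduces to bounding the remainder integral by $\Lambda/M$.

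The crude estimate $\tfrac12\int_a^b|B_2(\{x\})|\,|h_\ell''|\le c\int_a^b|h_\ell''|$ is not sufficient: one checks (see below) that $h_\ell''=O(1/M)$ uniformly while the interval has length at most $M$, so this only yields $O(1)$. The decisive step is therefore to integrate by parts once more. Since the periodized third Bernoulli polynomial $B_3(\{x\})$ is continuous and vanishes at every integer, the boundary contributions at $a$ and $b$ cancel and
\[
\int_a^b B_2(\{x\})\,h_\ell''(x)\,dx=-\frac13\int_a^b B_3(\{x\})\,h_\ell'''(x)\,dx,
\]
so that the remainder is controlled by $\int_a^b|h_\ell'''(x)|\,dx$.

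It remains to establish the uniform derivative bounds, for which I would use the explicit form \eqref{eq:z_l(x)}. A direct computation yields $z_\ell'(x)=-2(\alpha_\ell+\beta_\ell x)/(N-1)=O(1/M)$, $z_\ell''(x)=-2\beta_\ell/(N-1)=O(1/M^2)$, and $z_\ell'''\equiv0$ because $z_\ell$ is quadratic. Writing $h_\ell(x)=r(x)\,\phi(z_\ell(x))$ with $r(x)=\alpha_\ell+\beta_\ell x$ linear ($r=O(C_{r(x)}M)$, $r'=O(C_{r(x)})$, $r''=0$) and $\phi(u)=(1+u)\log(1+u)$, the crucial point—the one separating this lemma from \cref{lemma:g}—is that on each interval $(t_{\ell-1},t_\ell]$ with $\ell\le n$ one has $z_\ell(x)\in[0,1]$: the vertex of the downward parabola $z_\ell$ lies at $x=-\alpha_\ell/\beta_\ell\le0$, so $z_\ell$ is decreasing and trapped between its endpoint heights, whence $1+z_\ell(x)\in[1,2]$ stays bounded away from $0$. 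Consequently $\phi,\phi',\phi'',\phi'''$ are all $O(1)$, with no logarithmic blow-up, and differentiating $h_\ell=r\,\phi(z_\ell)$ three times and counting orders gives $h_\ell'''(x)=O(1/M^2)$ uniformly. Combining, $\int_a^b|h_\ell'''|\le M\cdot O(1/M^2)=O(1/M)$; absorbing the absolute constants, which depend only on $C_{r(x)}$ through the bounds on $r,r',z_\ell',z_\ell''$, into $\Lambda$ gives the claim.

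The main obstacle is the cancellation issue of the middle paragraph: because $h_\ell''$ is only $O(1/M)$, a modulus estimate of the first Euler--Maclaurin remainder loses a full power of $M$, and one genuinely needs the extra integration by parts (equivalently, one more Bernoulli term) together with $z_\ell'''\equiv0$ to reach the $O(1/M)$ rate. By contrast, the same argument applied to $g_\ell$ cannot give a clean $1/M$, since $1-z_\ell(x)\to0$ near the pole makes the corresponding $\phi$-derivatives blow up; this is precisely why \cref{lemma:g} carries an extra factor $\log M$.
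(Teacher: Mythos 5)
Your proof is correct and follows essentially the same route as the paper's: the paper does not reprove this lemma but imports it from \cite{BeltranEtayo2020}*{Lemma 3.5} (shifted to $[t_{\ell-1}+1,t_\ell]$), and that argument is precisely your second-order Euler--Maclaurin expansion combined with the bounds $r=O(C_{r(x)}M)$, $z_\ell'=O(1/M)$, $z_\ell''=O(1/M^2)$, $z_\ell'''\equiv 0$, which make $\int|h_\ell'''|=O(1/M)$. Your closing observation---that $1+z_\ell$ stays in a fixed interval bounded away from $0$ on the parallels above the equator, so no logarithmic loss occurs, whereas $1-z_\ell\to 0$ forces the extra $\log M$ in \cref{lemma:g}---is exactly the feature that separates the two lemmas.
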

	
	The following theorem is thus proved.
	
	\begin{theorem}\label{thm:asymptoticEnergy}
		For the {\gde} we have
		\begin{multline*}
			\expectation{\theta_{1},...,\theta_{2M-1} \in [0,2\pi]}{\E_{\log}(\diamondgen(N))}=-(N-1)\log{4}\\
			-2\sum_{\ell=1}^{n}\left(\frac{f_\ell(t_{\ell-1}+1)+f_\ell(t_{\ell})}{2}+\int_{t_{\ell-1}+1}^{t_{\ell}}f_{\ell}(x)\,dx\right)\\
			-(N-1)\sum_{\ell=1}^{n}\left(\frac{g_\ell(t_{\ell-1}+1)+g_\ell(t_{\ell})}{2}+\frac{g_\ell'(t_\ell)-g_\ell'(t_{\ell-1}+1)}{12}+\int_{t_{\ell-1}+1}^{t_{\ell}}g_{\ell}(x)\,dx\right)\\
			-(N-1)\sum_{\ell=1}^{n}\left(\frac{h_\ell(t_{\ell-1}+1)+h_\ell(t_{\ell})}{2}+\frac{h_\ell'(t_\ell)-h_\ell'(t_{\ell-1}+1)}{12}+\int_{t_{\ell-1}+1}^{t_{\ell}}h_{\ell}(x)\,dx\right)\\
			+{\mathrm{Error}},\quad \abs{\mathrm{Error}}\leq \Lambda nM\log M,
		\end{multline*}
		where for $1\leq \ell \leq n$ the functions $f_{\ell},g_{\ell},h_{\ell}$ are as in \cref{cor:exactEnergy} and the constant $\Lambda$ depends only on $C_{r(x)}$.
	\end{theorem}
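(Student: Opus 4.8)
The plan is to derive the claimed asymptotic formula directly from the exact expression in \cref{cor:exactEnergy}, by replacing each composite trapezoidal rule there with the corresponding integral (together with the $\tfrac{1}{12}$ derivative-difference correction in the cases of $g_\ell$ and $h_\ell$) and then consolidating all substitution errors into a single term. First I would take the identity of \cref{cor:exactEnergy} verbatim. The half-endpoint contributions $\tfrac{1}{2}\bigl(f_\ell(t_{\ell-1}+1)+f_\ell(t_\ell)\bigr)$, and their analogues for $g_\ell$ and $h_\ell$, already appear unchanged in the target formula, so only the three families $T_{[t_{\ell-1}+1,t_\ell]}(f_\ell)$, $T_{[t_{\ell-1}+1,t_\ell]}(g_\ell)$ and $T_{[t_{\ell-1}+1,t_\ell]}(h_\ell)$ need to be rewritten. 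Applying \cref{lemma:f} converts each $T(f_\ell)$ into $\int_{t_{\ell-1}+1}^{t_\ell} f_\ell$ up to an error at most $3C_{r(x)}M\log(2C_{r(x)}M)$, while \cref{lemma:g} and \cref{lemma:h} convert each $T(g_\ell)$ and $T(h_\ell)$ into the integral plus the $\tfrac{1}{12}$ correction, up to errors at most $\Lambda\log M/M$ and $\Lambda/M$ respectively.

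Next I would track how these per-interval errors accumulate under the prefactors $-2$, $-(N-1)$ and $-(N-1)$ multiplying the three sums. Summing the $f_\ell$ errors over $1\leq\ell\leq n$ and multiplying by $2$ yields a contribution of order $nM\log M$. The $g_\ell$ and $h_\ell$ errors are each weighted by $(N-1)$, and here I would invoke the crude bound $N\leq 5C_{r(x)}M^2$ from \eqref{eq:boundsN}: the total $g$-error is then at most $(N-1)\cdot n\Lambda\log M/M$, again of order $nM\log M$, whereas the total $h$-error is at most $(N-1)\cdot n\Lambda/M$, of order $nM$ and hence negligible by comparison. Finally, the term $-r(M)\log r(M)$ present in \cref{cor:exactEnergy} but absent from the target formula must be absorbed into the error; since $r(M)=\alpha_{n+1}+\beta_{n+1}M\leq 2C_{r(x)}M$, it is $O(M\log M)$ and therefore also dominated by $nM\log M$. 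Taking $\Lambda$ to be the maximum of the finitely many resulting constants, each depending only on $C_{r(x)}$, delivers the stated bound $\abs{\mathrm{Error}}\leq\Lambda nM\log M$.

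There is no serious obstacle, since the theorem is essentially a bookkeeping consolidation of \cref{cor:exactEnergy} with the three lemmas; the one point requiring care is the interaction between the smallness of the per-interval $g$- and $h$-errors (which decay like $1/M$) and the largeness of the prefactor $N-1\sim M^2$. It is precisely this interaction that lifts the $g_\ell$ contribution back up to order $nM\log M$, so that $f$ and $g$ jointly govern the error while $h$ does not, and one must therefore resist discarding the $1/M$-type errors as lower order. Keeping explicit track of the dependence of every constant on $C_{r(x)}$ alone, and not on $M$, $n$ or $N$, is the only ingredient that needs to be verified with some attention.
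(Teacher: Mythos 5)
Your proposal is correct and is essentially the paper's own argument: the paper simply states that the theorem follows from \cref{cor:exactEnergy} together with Lemmas \ref{lemma:f}--\ref{lemma:h}, which is exactly the bookkeeping you carry out, including the crucial point that the $(N-1)\sim M^2$ prefactor lifts the $\log M/M$ per-interval errors back to order $nM\log M$. In fact you make explicit a detail the paper leaves implicit, namely that the term $-r(M)\log r(M)$ from the exact formula is absorbed into the error via $r(M)\leq 2C_{r(x)}M$.
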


% A concrete example  ============================================================================

	\subsection{The quasioptimal Diamond ensemble for any number of points}\label{subsec:quasioptimalDiamond}
	
We now generalize the \qde of \cite{BeltranEtayo2020}*{Subsec. 4.3} for any choice of $N$. Let $M=7m$ with $m$ a positive integer. Let also $\gamma\in\{0,\ldots,800\}$ be an integer and $\varepsilon\in\{-1,0,1\}$ (we could be more general and say let $\gamma$ grow up to $1000$ and $|\varepsilon|\leq 10$, but we do not need it), and let $\delta\in[6+1/m,7-1/m]$ be such that $\delta m\in\Z$. Consider the \gde with
	\begin{equation}\label{eq:r(x)_Quasioptimal_gen}
		r(x)=
		\begin{cases}
			6x					& \text{if $0< x\leq 2m$},\\
			2m+5x				& \text{if $2m< x\leq 3m$},\\
			5m+4x				& \text{if $3m< x\leq 4m$},\\
			9m+3x				& \text{if $4m< x\leq 5m$},\\
			14m+2x				& \text{if $5m< x\leq 6m$},\\
			20m+x+\gamma				& \text{if $6m< x\leq \delta m$},\\
			20m+x+\gamma+1				& \text{if $\delta m< x\leq 7m-1$},\\
			20m+x+\gamma+1+\varepsilon			& \text{if $7m-1< x\leq 7m$}.
		\end{cases}
	\end{equation}
	Since $z(x)$ is defined by \eqref{eq:z_l(x)}, we can write it down explicitly with a simple, closed formula, in order to generate the points . We compute the associated constant:
	\begin{equation*}
		C_{r(x)}=\max_{1 \leq \ell \leq n+1}(20/7+(\gamma+1+\abs{\varepsilon})/(7m),5,7/2)\leq200.
	\end{equation*}
	The total number of points of the associated ensemble is
	\begin{equation*}
		N=239m^2+2+(2m-1)\gamma+2(7m-1-\delta m) +1+\varepsilon.
	\end{equation*}
	We now prove that this construction is general enough as to describe a collection of $N$ spherical points for any $N\geq241$ whose expected energy and spherical cap discrepancy are as in \cref{thm:finalmodel2}. We call this set of points the \emph{quasioptimal generalized Diamond ensemble} (\qgde) and its specific construction is given in \cref{subsec:proof1}. Note that, since we are primarily interested in the asymptotic behavior of the logarithmic energy, there is no loss of generality in considering sufficiently large $N$. For $N$ smaller than any given constant we can easily adapt the construction of \cite[Sec. 3.1]{BeltranEtayo2020} taking, for example, the integer part of the $r_j$ there defined\footnote{In the web page \url{https://personales.unican.es/beltranc/CollectionsofPoints.html}, the interested reader can consult, for a certain range of values of $N$, the energy and exact positions of the $N$ points on $\S^2$ using this approach.}, and computing the energy using \cite[Thm. 2.6]{BeltranEtayo2020}.

\subsection{Proof of \cref{thm:finalmodel2}}\label{subsec:proof1}

		We now prove that for any $N\in\N$, with $N\geq241$, one can choose $m\geq1$, $\gamma\in\set{0,1,\ldots,800}$, $\varepsilon\in\{-1,0,1\}$ and $\delta\in[6+1/m,7-1/m]$, with $\delta m\in\Z$, in equation \eqref{eq:r(x)_Quasioptimal_gen} such that the corresponding \gde has exactly $N$ points. There are many ways to achieve this goal yielding the expected energy of \cref{thm:finalmodel2}. A very simple choice is as follows.
		
		\begin{enumerate}
			\item If $N=239m^2+2$ for some $m\in\N$, with $m\geq1$, then we can take
			\begin{equation*}
				m=\sqrt{\frac{N-2}{239}},\quad \gamma=0,\quad \delta m= 7m-1,\quad \varepsilon=-1.
			\end{equation*}
			\item If $N$ is not of the form $239m^2+2$, let
			\begin{align*}
				m&=\floor*{\sqrt{\frac{N-2}{239}}},\quad \gamma=\floor*{\displaystyle\frac{N-239m^2-2}{2m-1}}\leq 800,\\ \eta&=(N-239m^2-2)\bmod (2m-1).
			\end{align*}
			Note that we are distributing the $N-239m^2-2$ remaining points by allocating $\gamma$ of them in each of the $2m-1$ central parallels of the sphere, and $\eta$ is the number of points left after doing that.
			\begin{enumerate}
				\item If $\eta=0$, then we do not have to add any extra points to any parallel, so we can take
				\begin{equation*}
					\delta m=7m-1,\quad  \varepsilon=-1.
				\end{equation*}
				\item If $\eta\neq 0$ is odd, we add an extra point to the central $\eta$ parallels of the sphere, that is, we take
				\begin{equation*}
					\delta m=7m-\frac{\eta-1}{2}-1,\quad \varepsilon=0.
				\end{equation*}
				\item If $\eta\neq 0$ is even, we add an extra point to the central $\eta-1$ parallels of the sphere, and another additional point to the equator. More precisely, we take
				\begin{equation*}
					\delta m=7m-\frac{\eta-2}{2}-1,\quad  \varepsilon=1.
				\end{equation*}
			\end{enumerate}
		\end{enumerate}
		
		\subsubsection{Proof of the expected energy in \cref{thm:finalmodel2}}
		
		All the integrals and derivatives in \cref{thm:asymptoticEnergy} can be computed, which is a huge calculus task. We have done these computations using the computer algebra system SageMath \cite{SageMath}. Moreover, since $C_{r(x)}$ is bounded above by $100$ and $n\leq 7$ independently of all the choices, the error term is of the form $\Lambda M\log M$, i.e., $\Lambda\sqrt{N}\log N$, with $\Lambda$ some universal constant. \qed

	\subsubsection{Proof of the discrepancy bounds in \cref{thm:finalmodel2}}
	
	We follow the proof of \cite[Thm. 1.1]{Etayo22}. It is enough to observe that \cite[Lemma 2.4]{Etayo22} is also satisfied by our construction by \cref{eq:boundsN}. We can then follow the proofs of \cite[Thms. 1.6 and 1.7]{Etayo22}, concluding the same result.

\section{The Diamond ensemble in the real projective plane}\label{sec:projplane}
	
	From now on we show how to use the ideas already discussed for $\S^2$ to generate collections of points on the real projective plane $\RP^2$ with small logarithmic energy. 

\subsection{Relation between the energies of spherical and projective points}

Given $N$ projective points $\omega_N=\set{x_1,\dotsc,x_N}\contained\RP^2$, one can consider the associated collection of $2N$ spherical points $\{\omega_N,-\omega_N\}$. An elementary computation using $\norm{x_i\pm x_j}^2=2\pm2\innerprod{x_i,x_j}$ yields
\begin{equation}\label{eq:energycomparison}
\E_{\log}^{\P}(\omega_N)=\frac12\E_{\log}(\{\omega_N,-\omega_N\})+N^2\log{2}.
\end{equation}

\begin{theorem}\label{thm:energyAntipodal}
	Let $p=2M-1$ be an odd integer. Let $\Omega(p,\{r_j\},\{z_j\})$ be as in \cref{def:Omega}, and let $\Omega^{a}(p,\{r_j\},\{z_j\})$ be the corresponding set of points assuming $\theta_{2M-j}=\theta_{j}+\pi$. Then,
	\begin{multline*}
		\expectation{\theta_1,\dotsc,\theta_M\in[0,2\pi]}{\E_{\log}(\Omega^{a}(2M-1,\{r_j\},\{z_j\}))}\\=\expectation{\theta_1,\dotsc,\theta_{2M-1}\in[0,2\pi]}{\E_{\log}(\Omega(2M-1,\{r_j\},\{z_j\}))}\\-2\sum_{j=1}^{M-1}r_j\log\left(1-\frac{(z_j-1)^{r_j}}{(z_j+1)^{r_j}}\right).
	\end{multline*}
	Moreover, if the $r_j$ satisfy $A^{-1}j\leq r_j\leq Aj$ for some positive constant $A$ and the $z_j$ are chosen as in \eqref{eq:optimalheights_proj}, then
	\begin{equation*}
		\abs*{-2\sum_{j=1}^{M-1}r_j\log\left(1-\frac{(z_j-1)^{r_j}}{(z_j+1)^{r_j}}\right)}\leq BM,
	\end{equation*}
	where $B$ is a constant depending only on $A$.
\end{theorem}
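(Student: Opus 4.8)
The plan is to compare $\Omega^{a}$ with $\Omega$ pair by pair. The convention $\theta_{2M-j}=\theta_j+\pi$ makes every point of the lower parallel $2M-j$ the antipode of the corresponding point of the upper parallel $j$, so the two configurations differ only through their phases. The self-interaction of a single parallel depends only on angular index differences, and every interaction involving a pole depends only on heights; these contribute identically to $\expectation{}{\E_{\log}(\Omega^{a})}$ and $\expectation{}{\E_{\log}(\Omega)}$. For two distinct parallels the expected contribution depends only on the law of the relative phase. Writing each phase through the free variables $\theta_1,\dotsc,\theta_M$ (parallel $j$ and parallel $2M-j$ carry $\theta_j$ and $\theta_j+\pi$, the equator carries $\theta_M$), I would check that the relative phase of any two distinct parallels is uniform on $[0,2\pi)$ — exactly as in $\Omega$ — \emph{except} for a parallel $j$ and its mirror $2M-j$ with $1\le j\le M-1$, where it equals the constant $-\pi$. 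Hence only these $M-1$ mirror pairs contribute to the difference of the two expected energies.

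First I would evaluate the cross-energy between parallel $j$ (height $z_j$, $r_j$ points) and parallel $2M-j$ (height $-z_j$, $r_j$ points). With $\rho^2=1-z_j^2$, two such points at angular positions $A,B$ satisfy $\norm{x-y}^2=2\rho^2(1-\cos(A-B))+4z_j^2$; grouping the $r_j^2$ ordered index pairs by $k=i-i'\bmod r_j$ turns the single-direction energy into $-\tfrac{r_j}{2}\sum_{k=0}^{r_j-1}\log\bigl(2\rho^2(1-\cos(2\pi k/r_j+\Delta))+4z_j^2\bigr)$, depending only on the relative phase $\Delta$. For $\Omega$ I average over $\Delta$ uniform using $\tfrac{1}{2\pi}\int_0^{2\pi}\log(a-b\cos\phi)\,d\phi=\log\tfrac{a+\sqrt{a^2-b^2}}{2}$ with $a=2+2z_j^2$, $b=2(1-z_j^2)$, $\sqrt{a^2-b^2}=4z_j$, giving the single-direction value $-r_j^2\log(1+z_j)$. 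For $\Omega^{a}$ I set $\Delta=-\pi$, use $1-\cos(\psi-\pi)=1+\cos\psi$, and evaluate $\prod_{k=0}^{r_j-1}\bigl(2\rho^2(1+\cos(2\pi k/r_j))+4z_j^2\bigr)$ over the $r_j$-th roots of unity $\zeta_k$. Factoring $2\rho^2+4z_j^2+\rho^2(\zeta+\zeta^{-1})$ through the quadratic $\rho^2w^2+(2+2z_j^2)w+\rho^2$, whose roots are $-\tfrac{1-z_j}{1+z_j}$ and $-\tfrac{1+z_j}{1-z_j}$, and using $\prod_k(\zeta_k-w)=(-1)^{r_j}(w^{r_j}-1)$, the product collapses to $P_j=\bigl((1+z_j)^{r_j}-(-1)^{r_j}(1-z_j)^{r_j}\bigr)^2$. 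Counting both ordered directions, the mirror-pair difference is $2\bigl(-\tfrac{r_j}{2}\log P_j+r_j^2\log(1+z_j)\bigr)=-r_j\log\tfrac{P_j}{(1+z_j)^{2r_j}}=-2r_j\log\bigl(1-(\tfrac{z_j-1}{z_j+1})^{r_j}\bigr)$, and summing over $1\le j\le M-1$ yields exactly the stated correction.

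For the estimate I set $q_j=\bigl(\tfrac{1-z_j}{1+z_j}\bigr)^{r_j}\in(0,1)$, so $(\tfrac{z_j-1}{z_j+1})^{r_j}=(-1)^{r_j}q_j$ and $\bigl|\log(1-(-1)^{r_j}q_j)\bigr|\le-\log(1-q_j)$. From $\tfrac{1-z_j}{1+z_j}\le e^{-2z_j}$ I get $q_j\le e^{-2z_jr_j}$, and $A^{-1}j\le r_j\le Aj$ forces $N=\Theta_A(M^2)$. I would split according to the size of $z_j$. When $z_j\ge\tfrac12$, then $q_j\le e^{-r_j}\le e^{-j/A}$, so $r_jq_j\le Aje^{-j/A}$ and these terms sum to $O_A(1)$. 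When $z_j<\tfrac12$, the height formula \eqref{eq:optimalheights_proj} (in the form $1-z_j=\tfrac{1+r_j+2\sum_{k<j}r_k}{N-1}$) together with $r_k\le Ak$ forces $j\ge\epsilon M$, hence $r_j=\Theta_A(M)$; telescoping $z_j=\sum_{k=j}^{M-1}\tfrac{r_k+r_{k+1}}{N-1}$ with $z_M=0$ and $r_k\ge r_j/A^2$ then gives $z_jr_j\ge\delta(M-j)$ for some $\delta=\delta(A)>0$, so with $s=M-j$ one has $r_jq_j\le AMe^{-2\delta s}$ and these terms sum to $O(M)$. In either range $q_j\le\theta<1$ with $\theta=\theta(A)$, whence $-\log(1-q_j)\le\tfrac{q_j}{1-\theta}$ and $\sum_{j=1}^{M-1}r_j\bigl|\log(1-(-1)^{r_j}q_j)\bigr|\le\tfrac{1}{1-\theta}\sum_j r_jq_j=O(M)$. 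The factor $2$ then gives the bound $BM$ with $B=B(A)$.

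The main obstacle is this last summation. A mere uniform bound $q_j\le\theta<1$ only yields $\sum_j r_j|\log(1-(-1)^{r_j}q_j)|=O(\sum_j r_j)=O(M^2)$, which is far too weak; the linear bound hinges on the fact that near the equator the $r_j$ are of order $M$, so that $q_j$ decays geometrically in the distance $M-j$, while near the poles the factor $e^{-j/A}$ kills the sum. Pinning down the threshold $j\ge\epsilon M$ and the lower bound $z_jr_j\gtrsim M-j$ from the precise form of \eqref{eq:optimalheights_proj}, with constants depending only on $A$, is where the real effort lies; by contrast the algebraic identity $\tfrac{P_j}{(1+z_j)^{2r_j}}=\bigl(1-(\tfrac{z_j-1}{z_j+1})^{r_j}\bigr)^2$ is a finite root-of-unity computation that I expect to go through routinely.
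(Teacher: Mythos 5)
Your proposal is correct and follows essentially the same route as the paper: the paper likewise reduces the identity to the mirror-parallel pairs (citing the decomposition and the expected cross-energy $-2r_j^2\log(1+z_j)$ from Beltrán--Etayo), computes the exact antipodal cross-energy by the same root-of-unity product (\cref{lemma:energy_parallels}, via Gradshteyn--Ryzhik 1.394, yielding $\left((z+1)^r-(z-1)^r\right)^2$), and bounds the correction term by $O(M)$ using geometric decay of $\left(\tfrac{1-z_j}{1+z_j}\right)^{r_j}$ away from the equator (\cref{lemma:lowerboundheights,lemma:1}). The only organizational difference lies in that last estimate---the paper proves the single height bound $z_{M-j}\geq Kj/M$ and sums one geometric series, whereas you split into the regimes $z_j\geq\tfrac12$ and $z_j<\tfrac12$---but both arguments exploit the identical mechanism that $z_jr_j$ grows at least linearly in the distance to the equator or to the pole.
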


\begin{proof}
	In \cite[Prop. 2.4]{BeltranEtayo2020}, the authors prove that the expected logarithmic energy of the set $\Omega(p,\{r_j\},\{z_j\})$ can be computed as a sum of three quantities: the energy between every point $x_{j}^{i}$ and the poles plus the energy between the poles, the energy of the scaled roots of unity for every parallel, and the energy between the points of every pair of parallels. Clearly, the condition $\theta_{2M-j}=\theta_{j}+\pi$ affects only the latter. Since said quantity is originally computed as an expectation with respect to the random parameters $\theta_1,\dotsc,\theta_{2M-1}$, to prove \cref{thm:energyAntipodal} we just need to:
\begin{itemize}	
\item	Subtract the expected value of the energy between each pair of parallels $j$ and $2M-j$, which by \cite[Cor. 2.3]{BeltranEtayo2020} is
	\begin{multline*}
		2\expectation{\theta_j,\theta_{2M-j}\in[0,2\pi]^2}{-\sum_{i,k=1}^{r_j}\log\norm*{x^{k}_{j}-x^{i}_{2M-j}}}\\
		=-r_j^2\log(1+z_j^2+2z_j)=-2r_j^2\log(1+z_j),
	\end{multline*}
\item and add the corresponding exact energy between those parallels, which will be computed in \cref{lemma:energy_parallels} and has the following expression:
		\begin{equation*}
			-2\sum_{i,k=1}^{r_j}\log\norm{x^{k}_{j}-x^{i}_{2M-j}}=
			-2r_j^2\log(1+z_j)-2r_j\log\left(1-\frac{(z_j-1)^{r_j}}{(z_j+1)^{r_j}}\right).
		\end{equation*}
\end{itemize}
The theorem follows using \cref{lemma:1}.\qedhere
	
\end{proof}

\subsection{Proof of \cref{prop:lowerboundproj}}
The lower bound for the logarithmic energy on the sphere readily implies \cref{prop:lowerboundproj} using \eqref{eq:mN} to find a lower bound for the spherical energy in \eqref{eq:energycomparison} and plugging in the values of $W_{\log}(\S^2)$ and $W_{\log}(\RP^2)$. \qed

\subsection{The general projective construction}\label{sec:Omega_proj}

We propose a construction for projective points in the spirit of the Diamond ensemble whose energy can be computed and, again, yields values which are very close to the lower bound.

\begin{definition}\label{def:generalstructureDiamondproj}
	Given positive integers $r_1,\ldots,r_{M-1},r_M$, with $r_M$ an even number, and heights $1>z_1>\cdots>z_{M-1}>z_M=0$, the set $\Omega_\P(M,\set{r_j},\set{z_j})$ is a collection of $N=1+r_1+\ldots+r_{M-1}+r_M/2$ points consisting of
	\begin{enumerate}
	  \item The north pole,
	  \item $r_j$ equally spaced points, rotated by a random phase $\theta_j$, on the parallel of height $z_j$ for $1\leq j\leq M-1$,
	  \item and the following $r_M/2$ equally spaced points on the half-equator:
	  $$
	  \left\{\left(\cos\frac{2\pi k}{r_M},\sin\frac{2\pi k}{r_M},0\right)\st 0\leq k\leq \frac{r_M}{2}-1  \right\}.
	  $$
	\end{enumerate}
\end{definition}

	In the (spherical) Diamond ensemble, the optimal $z_j$ given in \eqref{eq:heights} were found directly by computing the derivatives with respect to $z_j$ and setting them equal to $0$. The same method does not yield a solvable system in the projective case, but we can still use a simple formula for the heights: inspired by \eqref{eq:energycomparison}, we take the $z_j$ as the optimal heights above the equator for the associated set $\Omega(p=2M-1,\set{r_1,\ldots,r_{M-1},r_M,r_{M-1},\ldots,r_1})$ (which contains $2N$ spherical points), that is:
	\begin{equation}\label{eq:optimalheights_proj}
		z_{j}=1-\frac{1+r_j+2\displaystyle\sum_{k=1}^{j-1}r_k}{2N-1}.
	\end{equation}
	With this choice of the $z_j$ we denote the corresponding collection of points by $\Omega_\P(M,\{r_j\})$ and we have the following result, which is the projective version of \cref{thm:2.6}.
	
	\begin{theorem}\label{thm:projenergy_optimalheights}
		Let $r_1,\dotsc,r_{M}\in\N$ satisfy $A^{-1}j\leq r_j\leq Aj$ for some positive constant $A$. If the $z_j$ are chosen as in \eqref{eq:optimalheights_proj}, then
		\begin{multline*}
				\expectation{\theta_1,\dotsc,\theta_{M-1}\in[0,2\pi]}{\E_{\log}^{\P}(\Omega_\P(M,\set{r_j}))}
				=\left(N-1\right)^2\log{2}-\frac{r_M}{2}\log r_M-\sum_{j=1}^{M-1}r_j\log{r_j}\\
				-\frac{2N-1}{2}\sum_{j=1}^{M-1}r_j(1-z_j)\log(1-z_j)-\frac{2N-1}{2}\sum_{j=1}^{M-1}r_j(1+z_j)\log(1+z_j)\\
				+\mathrm{Error},\quad |\mathrm{Error}|\leq BM,
		\end{multline*}
where $B$ is a constant depending only on $A$.
	\end{theorem}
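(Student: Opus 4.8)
The plan is to reduce the computation entirely to the spherical antipodal configuration already analyzed in \cref{thm:energyAntipodal,thm:2.6}, via the energy comparison \eqref{eq:energycomparison}.

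First I would identify the \emph{spherical double} of the projective ensemble. Writing $\omega_N=\Omega_\P(M,\{r_j\})$, the collection $\{\omega_N,-\omega_N\}$ of $2N$ spherical points is precisely the antipodal configuration $\Omega^{a}(2M-1,\{r_j\},\{z_j\})$ of \cref{thm:energyAntipodal} built from the symmetric data $(r_1,\dotsc,r_{M-1},r_M,r_{M-1},\dotsc,r_1)$ and $(z_1,\dotsc,z_{M-1},0,-z_{M-1},\dotsc,-z_1)$. Indeed, the north pole and its antipode furnish the two poles; each parallel at height $z_j$ with phase $\theta_j$ is paired with its antipodal image at height $-z_j$ and phase $\theta_j+\pi$, which is exactly the constraint $\theta_{2M-j}=\theta_j+\pi$; and, since $r_M$ is even, the $r_M/2$ half-equator points together with their antipodes fill out the full set of $r_M$ equally spaced equatorial points, i.e.\ the equator of $\Omega^a$ with deterministic phase $\theta_M=0$. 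A direct count gives $2N=2+2\sum_{j=1}^{M-1}r_j+r_M$, and the heights \eqref{eq:optimalheights_proj}, having denominator $2N-1$, are precisely the optimal heights \eqref{eq:heights} of this doubled configuration, so that both \cref{thm:2.6,thm:energyAntipodal} apply.

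Next I would take expectations in \eqref{eq:energycomparison}, namely $\E_{\log}^{\P}(\omega_N)=\tfrac12\E_{\log}(\{\omega_N,-\omega_N\})+N^2\log 2$. The one point demanding care is that on the projective side we average only over $\theta_1,\dotsc,\theta_{M-1}$, the equatorial phase being fixed at $0$, whereas \cref{thm:energyAntipodal} averages over $\theta_1,\dotsc,\theta_M$. This is harmless: the equatorial self-energy is rotation invariant, and the interaction of the equator with any parallel $j$ or $2M-j$ becomes, after averaging over the random phase $\theta_j$, a quantity independent of the equatorial phase; hence fixing $\theta_M=0$ does not change the expectation and \cref{thm:energyAntipodal} may be applied verbatim.

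Finally I would assemble the pieces. Combining \cref{thm:energyAntipodal} with \cref{thm:2.6} applied to the symmetric sequence (whose total spherical count is $2N$, so that each occurrence of the count in \cref{thm:2.6} is replaced by $2N$) expresses $\expectation{\theta_1,\dotsc,\theta_{M-1}\in[0,2\pi]}{\E_{\log}(\Omega^a)}$ as the explicit sum of \cref{thm:2.6} plus the correction $-2\sum_{j=1}^{M-1}r_j\log\bigl(1-(z_j-1)^{r_j}/(z_j+1)^{r_j}\bigr)$, which the hypothesis $A^{-1}j\leq r_j\leq Aj$ bounds in absolute value by $BM$. Halving and adding $N^2\log 2$ is then pure bookkeeping: the term $-\tfrac{2N-1}{2}\log 4=-(2N-1)\log 2$ combines with $N^2\log 2$ into $(N-1)^2\log 2$, the three remaining sums reproduce the stated formula with prefactors $\tfrac12$ and $\tfrac{2N-1}{2}$, and the halved correction is absorbed into $\mathrm{Error}$ with $|\mathrm{Error}|\leq BM$ after renaming the constant depending only on $A$. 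Since every analytic ingredient is already in place, I expect no genuine obstacle; the only steps requiring attention are the exact identification of $\{\omega_N,-\omega_N\}$ with $\Omega^a$ and the verification that fixing $\theta_M=0$ leaves the expectation unchanged.
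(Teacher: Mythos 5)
Your proof is correct and takes essentially the same route as the paper, whose entire proof of this theorem is the one-liner \quotemarks{Immediate from \eqref{eq:energycomparison} and \cref{thm:2.6,thm:energyAntipodal}} --- precisely the reduction you carry out, including the correct bookkeeping that turns $-\tfrac{2N-1}{2}\log 4 + N^2\log 2$ into $(N-1)^2\log 2$. Your extra care in identifying $\{\omega_N,-\omega_N\}$ with the antipodal configuration $\Omega^a$ (using that $r_M$ is even) and in checking that fixing the equatorial phase $\theta_M=0$ leaves the expectation unchanged supplies details the paper leaves implicit.
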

\begin{proof}
	Immediate from \cref{eq:energycomparison} and \cref{thm:2.6,thm:energyAntipodal}.
\end{proof}

	\subsection{The projective generalized Diamond ensemble and its energy}\label{sec:diamondproj}
\begin{definition}[Projective generalized Diamond ensemble]\label{def:generalizedDiamondproj}
For any choice of $r(x)$ as in \cref{def:Crx} such that $r(M)$ is an even number, the \emph{projective generalized Diamond ensemble} (\pgde) is the collection of
\begin{equation*}
	N=1+\frac{\alpha_{n+1}+\beta_{n+1} M}{2}+\sum_{\ell=1}^{n}\sum_{j=t_{\ell-1}+1}^{t_\ell}(\alpha_\ell+\beta_\ell j)
\end{equation*}
points given by $\Omega_\P(M,\{r(1),\ldots,r(M)\})$.
As in the case of the sphere, we have $z_j=z(j)$, where $z(x)$ is piecewise defined in each interval $(t_{\ell-1},t_\ell]$ by a quadratic polynomial. We denote this set of points by $\diamondproj(N)$, omitting in the notation the dependence on all the parameters $M,n,t_\ell,\alpha_\ell,\beta_\ell$.
\end{definition}

From \cref{thm:energyAntipodal} and \cref{eq:energycomparison}, the expected value of the logarithmic energy of the \pgde %$\diamondproj(N)$ 
follows from that of the associated (spherical) \gde. As in the case of the sphere, this construction is general enough as to describe a collection of $N$ projective points, this time for any $N\geq121$, whose expected energy is as in \cref{thm:maintheorem2proj}. We call this set of points the \emph{quasioptimal projective generalized Diamond ensemble} (\qpgde) and its specific construction is given in \cref{subsec:proof2}.

\subsection{Proof of \cref{thm:maintheorem2proj}}\label{subsec:proof2}
		Take the construction given in the proof of \cref{thm:finalmodel2} for $2N$ points, which clearly imposes that $r_M$ is even. The associated {\pgde} $\diamondproj(N)$  has $N$ points, as claimed. 
The projective logarithmic energy of $\diamondproj(N)$  is, from \cref{thm:finalmodel2}, \cref{thm:energyAntipodal} and \cref{eq:energycomparison}, given by
$$
\frac12\left(W_{\log}(\S^2)(2N)^2-\frac{1}{2}(2N)\log (2N)+2Nc_{\diamond}\right)+ O\left(\sqrt{N}\log N\right)+N^2\log2+\mathrm{Error},
$$
where $|\mathrm{Error}|\leq BM\leq B\sqrt{N}$ for some universal constant $B$. The theorem follows by substituting $W_{\log}(\S^2)=1/2-\log2$ and $W_{\log}(\RP^2)=1-\log2$. \qed

	\section{Auxiliary lemmas}\label{sec:proofs}
	
	\begin{lemma}\label{lemma:energy_parallels}
		Let $x_1,\dotsc,x_{r}\in\S^2$ be $r$ equally spaced points on the parallel of height $z$. Then,
		\begin{equation*}
			-2\sum_{i,k=1}^{r}\log\norm{x_{i}+x_{k}}=
			-2r^2\log(1+z)-2r\log\left(1-\frac{(z-1)^{r}}{(z+1)^{r}}\right).
		\end{equation*}
	\end{lemma}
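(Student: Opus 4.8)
The plan is to reduce the double sum to a product over $r$-th roots of unity and to evaluate that product with a classical cyclotomic identity. First I would expand $\norm{x_i+x_k}^2 = 2 + 2\innerprod{x_i,x_k}$ and, writing the points in the coordinate form of \cref{def:Omega} (with an arbitrary common phase, which cancels since the inner product depends only on angle differences), use the cosine addition formula to obtain $\norm{x_i+x_k}^2 = 2(1+z^2) + 2(1-z^2)\cos\tfrac{2\pi(i-k)}{r}$. The key observation is that this depends only on $i-k \bmod r$; since for each residue $d\in\{0,\dotsc,r-1\}$ there are exactly $r$ ordered pairs $(i,k)$ with $i-k\equiv d$, the double sum collapses and
\[
-2\sum_{i,k=1}^r\log\norm{x_i+x_k} = -r\sum_{d=0}^{r-1}\log\Bigl(2(1+z^2)+2(1-z^2)\cos\tfrac{2\pi d}{r}\Bigr) = -r\log P,
\]
where $P$ is the product of these $r$ factors. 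Everything then hinges on evaluating $P$ in closed form.

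The core step is a factorization. I would write each factor as $(1+z)^2\bigl(1 - 2a\cos\tfrac{2\pi d}{r} + a^2\bigr)$ with $a = \tfrac{z-1}{z+1}$; the short checks $(1+z)^2(1+a^2) = 2(1+z^2)$ and $-2(1+z)^2 a = 2(1-z^2)$ confirm this identification. Applying the classical identity $\prod_{d=0}^{r-1}\bigl(1 - 2a\cos\tfrac{2\pi d}{r} + a^2\bigr) = (1 - a^r)^2$, which follows from $\prod_{d=0}^{r-1}(1 - a\zeta^d) = 1 - a^r$ for $\zeta = e^{2\pi i/r}$, then gives $P = (1+z)^{2r}(1 - a^r)^2 = \bigl((z+1)^r - (z-1)^r\bigr)^2$, where I have used $(z+1)^r a^r = (z-1)^r$.

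Finally I would take logarithms. Since $0<z<1$ we have $(z+1)^r - (z-1)^r > 0$, so $-r\log P = -2r\log\bigl((z+1)^r - (z-1)^r\bigr)$, and factoring out $(z+1)^r$ yields
\[
-2\sum_{i,k=1}^r\log\norm{x_i+x_k} = -2r^2\log(1+z) - 2r\log\Bigl(1 - \tfrac{(z-1)^{r}}{(z+1)^{r}}\Bigr),
\]
which is the claimed formula.

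The main obstacle is the evaluation of $P$: one must identify the correct auxiliary parameter $a$ so that the factor fits the Chebyshev-type product identity, and then track signs and the normalizing constant $(1+z)^2$ when matching $(1+z)^{2r}(1-a^r)^2$ against $\bigl((z+1)^r-(z-1)^r\bigr)^2$. An equivalent but slightly more computational route factors the quadratic $(1-z^2)\zeta^2 + 2(1+z^2)\zeta + (1-z^2)$ directly over $\zeta = e^{2\pi i d/r}$ and uses $\prod_{d}(w-\zeta^d)=w^r-1$ together with $\prod_d \zeta^d = (-1)^{r-1}$; either way the cyclotomic product is the crux of the argument.
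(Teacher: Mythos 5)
Your proof is correct and takes essentially the same route as the paper's: reduce the double sum by rotational symmetry to a single product over the angles $2\pi d/r$, then evaluate that product with the identity $\prod_{d=0}^{r-1}\left(x^2-2xy\cos\tfrac{2\pi d}{r}+y^2\right)=(x^r-y^r)^2$ (Gradshteyn--Ryzhik 1.394). The paper applies this identity with $x=(z+1)/\sqrt{2}$, $y=(z-1)/\sqrt{2}$, while you use the equivalent normalized form with $a=(z-1)/(z+1)$ after factoring out $(1+z)^2$; the two parametrizations are interchangeable.
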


	\begin{proof}
		By symmetry, we can assume that
		\begin{align*}
			x_{i}&=\left(\sqrt{1-z^2}\cos\frac{2i\pi}{r},\sqrt{1-z^2}\sin\frac{2i\pi}{r},z\right),
		\end{align*}
		so we have
		\begin{align*}
			x_{i}+x_{r}=\left(\sqrt{1-z^2}\left(\cos\frac{2i\pi}{r}+1\right),\sqrt{1-z^2}\sin\frac{2i\pi}{r},2z\right),
		\end{align*}
		and the sum in the lemma equals
		\begin{align*}
			-r\sum_{i=1}^{r}\log\norm{x_{i}+x_{r}}^2&=-r\sum_{i=0}^{r-1}\log\left(2(1-z^2)\bigg(1+\cos\frac{2i\pi}{r}\bigg)+4z^2\right)\\
			&=-r\log\left(2^{r}\prod_{i=0}^{r-1}\left(1+z^2+(1-z^2)\cos\frac{2i\pi}{r}\right)\right).
		\end{align*}
		To compute this product we use \cite[Formula 1.394]{GradshteynRyzhik2007}:
		\begin{equation}\label{Grad:1.394}
			\prod_{i=0}^{n-1} \left\{ x^2 -2xy\cos\frac{2i\pi}{n}+y^2\right\}=x^{2n}-2x^ny^n+y^{2n}=(x^n-y^n)^2,
		\end{equation}
		In our case, we can take
		\begin{equation*}
			x=\frac{z+1}{\sqrt{2}},\quad y=\frac{z-1}{\sqrt{2}},
		\end{equation*}
		which, by \cref{Grad:1.394}, yields
		\begin{equation*}
			\prod_{i=0}^{r-1}\left(1+z^2+(1-z^2)\cos\frac{2k\pi}{r}\right)=\frac{1}{2^{r}}\left((z+1)^{r}-(z-1)^{r}\right)^2.
		\end{equation*}
		Hence, the sum in the lemma equals
		\begin{align*}
			-r\log\left((z+1)^{r}-(z-1)^{r}\right)^2&=-2r\log\left((z+1)^{r}-(z-1)^{r}\right),
		\end{align*}
		as wanted.
	\end{proof}

	\begin{lemma}\label{lemma:lowerboundheights}
		Let $r_1,\dotsc,r_{M}\in\N$ satisfy $A^{-1}j\leq r_j\leq Aj$ for some positive constant $A$, and let the $z_j$ be chosen as in \eqref{eq:optimalheights_proj}. Then, there exists a constant $K>0$ depending only on $A$ such that
		\begin{equation*}
			z_{M-j}\geq\frac{Kj}{M},\quad 1\leq j\leq M-1.
		\end{equation*}
	\end{lemma}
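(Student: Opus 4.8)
The plan is to turn the defining formula \eqref{eq:optimalheights_proj} into a manifestly positive expression for $z_{M-j}$ and then bound its numerator and denominator separately, using the lower bounds $r_k\geq A^{-1}k$ for the former and the upper bounds $r_k\leq Ak$ for the latter.

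First I would rewrite $z_{M-j}$. Since $N=1+\sum_{k=1}^{M-1}r_k+r_M/2$, we have $2N-2=2\sum_{k=1}^{M-1}r_k+r_M$. Substituting this and the formula \eqref{eq:optimalheights_proj} (with index $M-j$ in place of $j$), the partial sums $2\sum_{k=1}^{M-j-1}r_k$ cancel and one is left with
\[
z_{M-j}=\frac{r_{M-j}+2\displaystyle\sum_{k=M-j+1}^{M-1}r_k+r_M}{2N-1}.
\]
This already exhibits $z_{M-j}>0$; the task is to extract the factor $j/M$.

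For the numerator I would use only $r_k\geq A^{-1}k$ together with the elementary identity
\[
(M-j)+2\sum_{k=M-j+1}^{M-1}k+M=j(2M-j),
\]
which follows because the middle sum has $j-1$ terms and equals $(2M-j)(j-1)/2$, after which the three pieces collapse. Hence
\[
r_{M-j}+2\sum_{k=M-j+1}^{M-1}r_k+r_M\geq A^{-1}\,j(2M-j)\geq A^{-1}jM,
\]
the last inequality because $j\leq M-1$ forces $2M-j\geq M$. For the denominator, the bounds $r_k\leq Ak$ give $2N=2+2\sum_{k=1}^{M-1}r_k+r_M\leq 2+AM^2$, so $2N-1\leq (A+1)M^2$. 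Combining the two estimates,
\[
z_{M-j}\geq\frac{A^{-1}jM}{(A+1)M^2}=\frac{j}{A(A+1)M},
\]
which is the claim with $K=1/\bigl(A(A+1)\bigr)$, a constant depending only on $A$.

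I do not expect a genuine obstacle: the argument is a direct computation once the right rewriting is in place. The only steps requiring care are the bookkeeping in passing from \eqref{eq:optimalheights_proj} to the positive fraction for $z_{M-j}$ (the cancellation of the partial sums and the off-by-one in the summation limits) and the verification of the telescoping identity for the numerator. In particular I would check the boundary case $j=1$, where the sum $\sum_{k=M-j+1}^{M-1}$ is empty, to confirm the limits are consistent with the identity.
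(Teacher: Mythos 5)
Your proof is correct, and it is in substance the paper's own argument reorganized: the paper bounds each increment $z_k-z_{k+1}=(r_k+r_{k+1})/(2N-1)$ from below and telescopes from $z_M=0$, which is exactly the sum you evaluate in closed form (your fraction for $z_{M-j}$) before applying the bounds $r_k\geq A^{-1}k$ and $2N-1\leq(A+1)M^2$. Both routes even produce the same constant $K=1/\bigl(A(A+1)\bigr)$, so nothing needs to be fixed.
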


	\begin{proof}
		Note first that $2+A^{-1}M^2\leq N\leq 2+AM^2$. 		We claim that there exists a constant $K_1>0$ such that
		\begin{equation*}
			z_{j}-z_{j+1}\geq \frac{K_1j}{M^2}.
		\end{equation*}
		Indeed, we have
		\begin{align*}
			z_{j}-z_{j+1}&=\frac{r_j+r_{j+1}}{N-1}\geq \frac{A^{-1}j+A^{-1}(j+1)}{N-1}\geq\frac{2A^{-1}j}{(1+A)M^2},
		\end{align*}
		as claimed. Hence,
		\begin{align*}
			z_{M-j}&=z_M+(z_{M-1}-z_{M})+(z_{M-2}-z_{M-1})+\dotsb+(z_{M-j}-z_{M-j+1})\\
				   &\geq\frac{K_1(M-1)}{M^2}+\frac{K_1(M-2)}{M^2}+\dotsb+\frac{K_1(M-j)}{M^2}= \frac{K_1}{M^2}\sum_{k=1}^{j}(M-k)\\
				   &=\frac{K_1jM}{M^2}-\frac{K_1j(j+1)}{2M^2}=\frac{K_1j}{M^2}\left(M-\frac{j+1}{2}\right)\geq \frac{K_1j}{2M},
		\end{align*}
		and the lemma follows.
	\end{proof}
	
	\begin{lemma}\label{lemma:1}
		Let $r_1,\dotsc,r_{M}\in\N$ satisfy $A^{-1}j\leq r_j\leq Aj$ for some positive constant $A$, and let the $z_j$ be chosen as in \eqref{eq:optimalheights_proj}. Then, there exists a constant $\Upsilon>0$ depending only on $A$ such that
		\begin{equation*}
			\abs*{\sum_{j=1}^{M-1}r_j\log\bigg(1-\frac{(z_j-1)^{r_j}}{(z_j+1)^{r_j}}\bigg)}\leq \Upsilon M.
		\end{equation*}
	\end{lemma}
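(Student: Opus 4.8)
The plan is to bound the sum termwise, showing that each summand $r_j\log\bigl(1-q_j^{r_j}\bigr)$, where I abbreviate $q_j=\tfrac{z_j-1}{z_j+1}$, is controlled by $r_j\lvert q_j\rvert^{r_j}$, and that the resulting quantities decay geometrically as one moves away from the endpoints $j=1$ and $j=M$, so that the total sum is $O(M)$.

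First I would record the elementary facts about $q_j$. Since the defining formula \eqref{eq:optimalheights_proj} forces $0<z_j<1$ for $1\le j\le M-1$, the number $q_j$ lies in $(-1,0)$, and its $r_j$-th power has absolute value $\lvert q_j\rvert^{r_j}=\bigl(\tfrac{1-z_j}{1+z_j}\bigr)^{r_j}$. A one-line calculus estimate, namely that $g(z)=\log\tfrac{1-z}{1+z}$ satisfies $g'(z)=-\tfrac{2}{1-z^2}\le-2$ on $[0,1)$ and hence $g(z)\le-2z$, yields
\[
\lvert q_j\rvert^{r_j}\le e^{-2r_jz_j}.
\]

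The crux is then to insert the two-sided control on $r_j$ and the lower bound on $z_j$. Using $r_j\ge A^{-1}j$ together with the estimate $z_j\ge K(M-j)/M$ coming from \cref{lemma:lowerboundheights} (re-indexed by replacing $j$ with $M-j$), I obtain the key inequality
\[
\lvert q_j\rvert^{r_j}\le\exp\!\Bigl(-\tfrac{2K}{AM}\,j(M-j)\Bigr).
\]
Because $j(M-j)\ge M-1$ for every $1\le j\le M-1$, this already shows $\lvert q_j\rvert^{r_j}\le\rho$ for a constant $\rho=\rho(A)<1$ (one may take $\rho=e^{-K/A}$ for all $M\ge 2$), so that $1-q_j^{r_j}\in[1-\rho,1+\rho]$ stays bounded away from $0$. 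Consequently the power-series bound $\lvert\log(1-t)\rvert\le\lvert t\rvert/(1-\lvert t\rvert)$ gives $\lvert\log(1-q_j^{r_j})\rvert\le\lvert q_j\rvert^{r_j}/(1-\rho)$, and by the triangle inequality the whole sum is dominated by $\tfrac{1}{1-\rho}\sum_{j}r_j\lvert q_j\rvert^{r_j}$.

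It then remains to verify that $\sum_{j=1}^{M-1}r_j\exp\bigl(-\tfrac{2K}{AM}j(M-j)\bigr)=O(M)$. I would split the sum at $j=M/2$ and exploit the symmetry of $j(M-j)$: on the first half $j(M-j)\ge jM/2$, which gives geometric decay $e^{-(K/A)j}$ and, with $r_j\le Aj$, a convergent series of size $O(1)$; on the second half, writing $j=M-i$, one has $r_j\le AM$ while $j(M-j)\ge iM/2$, producing $AM\sum_i e^{-(K/A)i}=O(M)$. Summing the two halves yields $O(M)$, so the lemma holds with $\Upsilon$ depending only on $A$ (through $A$, $K$, and $\rho$). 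The main obstacle is precisely this last estimate: the naive uniform bound $\lvert\log(1-q_j^{r_j})\rvert\le C_\rho$ only produces $O(M^2)$, and the sharpening to $O(M)$ rests entirely on the quadratic exponent $j(M-j)$, whose geometric decay away from the endpoints compensates exactly for the factor $r_j=O(M)$ that appears near the equator.
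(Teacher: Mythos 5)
Your proof is correct and follows essentially the same route as the paper's: both hinge on the height lower bound of \cref{lemma:lowerboundheights} to produce the decay $\exp(-cj(M-j)/M)$, a uniform bound keeping $1-q_j^{r_j}$ away from $0$, and a geometric summation yielding $O(M)$. The only cosmetic differences are that you control the logarithm via the power-series bound $|\log(1-t)|\le |t|/(1-|t|)$ instead of the paper's odd/even split combined with $\log(1+x)\le-\log(1-x)$, and you derive the exponential estimate from $\log\frac{1-z}{1+z}\le-2z$ rather than the paper's $(1+u)^{v}\le e^{uv}$ manipulation.
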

	
	\begin{proof}
		Let $S$ denote the quantity we want to bound. We have
		\begin{align*}
			S&\leq \abs*{\sum_{j=1,\ \text{$r_j$ odd}}^{M-1} r_j\log\left(1+\left(\frac{1-z_j}{1+z_j}\right)^{r_j}\right)}+\abs*{\sum_{j=1,\ \text{$r_j$ even}}^{M-1} r_j\log\left(1-\left(\frac{1-z_j}{1+z_j}\right)^{r_j}\right)}\\
			&\leq \abs*{\sum_{j=1,\ \text{$r_j$ odd}}^{M-1} r_j\log\left(1-\left(\frac{1-z_j}{1+z_j}\right)^{r_j}\right)}+\abs*{\sum_{j=1,\ \text{$r_j$ even}}^{M-1} r_j\log\left(1-\left(\frac{1-z_j}{1+z_j}\right)^{r_j}\right)}\\
			&=-\sum_{j=1}^{M-1} r_j\log\left(1-\left(\frac{1-z_j}{1+z_j}\right)^{r_j}\right)=-\sum_{j=1}^{M-1} r_{M-j}\log\left(1-\left(\frac{1-z_{M-j}}{1+z_{M-j}}\right)^{r_{M-j}}\right),
		\end{align*}
		where we have used that $\log(1+x)\leq-\log(1-x)$ for $0<x<1$. From \cref{lemma:lowerboundheights}, we have
		\begin{align*}
			S&\leq -A\sum_{j=1}^{M-1} (M-j)\log\left(1-\left(\frac{1-Kj/M}{1+Kj/M}\right)^{A^{-1}(M-j)}\right)\\
			&=-A\sum_{j=1}^{M-1} (M-j)\log\left(1-\left(\frac{M-Kj}{M+Kj}\right)^{A^{-1}(M-j)}\right).
		\end{align*}
		Recall the following inequality:
		\begin{equation}\label{eq:ineqlog}
			-\log(1-x)\leq Cx,\quad 0<x<a<1,\quad C>0,\ C=C(a).
		\end{equation}
		Note that
		\begin{align*}
			\left(\frac{M-Kj}{M+Kj}\right)^{A^{-1}(M-j)}&=\left(1+\frac{-2Kj}{M+Kj}\right)^{(M+Kj)\frac{A^{-1}(M-j)}{M+Kj}}\leq e^{-2Kj\frac{A^{-1}(M-j)}{M+Kj}}\\
			&\leq e^{-\widetilde{K}j(M-j)/M}\leq e^{-\widetilde{K}(M-1)/M}\leq e^{-\widetilde{K}/2}<1,
		\end{align*}
		where $\widetilde{K}=2KA^{-1}/(1+K)$. Then, using \eqref{eq:ineqlog} we have
		\begin{align*}
			S&\leq AC\sum_{j=1}^{M-1} (M-j)\left(\frac{M-Kj}{M+Kj}\right)^{A^{-1}(M-j)}\leq AC\sum_{j=1}^{M-1} (M-j)e^{-\widetilde{K}j(M-j)/M}.
		\end{align*}
		Let $\alpha=e^{-\widetilde{K}}<1$. Without loss of generality, we can assume $M>2$. Then,
		\begin{align*}
			S&\leq AC\sum_{j=1}^{M-1} (M-j)\alpha^{j(M-j)/M}\leq 2ACM\sum_{j=1}^{\floor{M/2}+1} \alpha^{j(M-j)/M}\\
			&\leq 2ACM \sum_{j=1}^{\floor{M/2}+1}\alpha^{jD}\leq  2ACM \sum_{j=1}^{\infty}\alpha^{jD}= 2ACM\frac{\alpha^{D}}{1-\alpha^{D}},
		\end{align*}
		where $D>0$ is a constant. The lemma follows.
	\end{proof}	

%--------------------- BIBLIOGRAPHY ----------------------------------------

\bibliographystyle{amsplain}
	
\begin{bibdiv}	
\begin{biblist}
	
	\bib{Aistleitner2012}{article}{
		author={Aistleitner, C.},
		author={Brauchart, J.~S.},
		author={Dick, J.},
		title={Point sets on the sphere $\mathbb{S}^2$ with small spherical cap
			discrepancy},
		date={2012Dec},
		journal={Discrete \& Computational Geometry},
		volume={48},
		number={4},
		pages={990\ndash 1024},
	}
	
	\bib{Andersonetal}{article}{
		author={Anderson, A.},
		author={Dostert, M.},
		author={Grabner, P.~J.},
		author={Matzke, R.~W.},
		author={Stepaniuk, T.~A.},
		title={{Riesz and Green energy on projective spaces}},
		journal={arXiv:2204.04015},
	}
	
	\bib{Beck1984}{article}{
		author={Beck, J.},
		title={Some upper bounds in the theory of irregularities of
			distribution},
		date={1984},
		journal={Acta Arithmetica},
		volume={43},
		number={2},
		pages={115\ndash 130},
		url={http://eudml.org/doc/205894},
	}
	
	\bib{beck_1984}{article}{
		author={Beck, J.},
		title={Sums of distances between points on a sphere --- an application
			of the theory of irregularities of distribution to discrete geometry},
		date={1984},
		journal={Mathematika},
		volume={31},
		number={1},
		pages={33–41},
	}
	
	\bib{BeltranCorralCriado2019}{article}{
		author={Beltrán, C.},
		author={Corral, N.},
		author={G.~Criado~del Rey, J.},
		title={Discrete and continuous {Green} energy on compact manifolds},
		date={2019},
		journal={Journal of Approximation Theory},
		volume={237},
		pages={160\ndash 185},
	}
	
	\bib{BeltranEtayo2020}{article}{
		author={Beltrán, C.},
		author={Etayo, U.},
		title={The {Diamond} ensemble: A constructive set of points of spherical
			points with small logarithmic energy},
		date={2020},
		journal={Journal of Complexity},
		volume={59},
		pages={101471},
	}
	
	\bib{9031002}{inproceedings}{
		author={Beltrán, C.},
		author={Etayo, U.},
		title={The {Diamond} ensemble: a well distributed family of points on
			$\mathbb{S}^2$},
		date={2019},
		booktitle={2019 13th international conference on sampling theory and
			applications {(SampTA)}},
		pages={1\ndash 4},
	}
	
	\bib{BorodachovHardinSaff2019}{book}{
		author={Borodachov, S.~V.},
		author={Hardin, D.~P.},
		author={Saff, E.~B.},
		title={{Discrete Energy on Rectifiable Sets}},
		series={Springer Monographs in Mathematics},
		publisher={Springer, New York},
		date={2019},
	}
	
	\bib{Brauchart2008}{article}{
		author={Brauchart, J.~S.},
		title={Optimal logarithmic energy points on the unit sphere},
		date={2008},
		journal={Math. Comp.},
		volume={77},
		number={263},
		pages={1599\ndash 1613\ndash 326},
	}
	
	\bib{BrauchartGrabner2015}{article}{
		author={Brauchart, J.~S.},
		author={Grabner, P.~J.},
		title={Distributing many points on spheres: minimal energy and designs},
		date={2015},
		ISSN={0885-064X},
		journal={J. Complexity},
		volume={31},
		number={3},
		pages={293\ndash 326},
	}
	
	\bib{BrauchartHardinSaff2012}{inproceedings}{
		author={Brauchart, J.~S.},
		author={Hardin, D.~P.},
		author={Saff, E.~B.},
		title={The next-order term for optimal {R}iesz and logarithmic energy
			asymptotics on the sphere},
		date={2012},
		booktitle={Recent advances in orthogonal polynomials, special functions, and
			their applications},
		series={Contemp. Math.},
		volume={578},
		publisher={Amer. Math. Soc., Providence, RI},
		pages={31\ndash 61},
	}
	
	\bib{BeterminSandier2018}{article}{
		author={Bétermin, L.},
		author={Sandier, E.},
		title={Renormalized energy and asymptotic expansion of optimal
			logarithmic energy on the sphere},
		date={2018},
		journal={Constructive Approximation},
		volume={47},
		number={1},
		pages={39\ndash 74},
	}
	
	\bib{ConwayHardinSloane1996}{article}{
		author={Conway, John~H.},
		author={Hardin, Ronald~H.},
		author={Sloane, Neil J.~A.},
		title={{Packing lines, planes, etc.: packings in {Grassmannian}
				spaces}},
		date={1996},
		journal={Experimental Mathematics},
		volume={5},
		number={2},
		pages={139 \ndash  159},
	}
	
	\bib{Dubickas1996}{article}{
		author={Dubickas, A.},
		title={On the maximal product of distances between points on a sphere},
		date={1996},
		journal={Liet. Mat. Rink.},
		volume={36},
		number={3},
		pages={303\ndash 312},
	}
	
	\bib{Etayo22}{article}{
		author={Etayo, U.},
		title={{Spherical Cap Discrepancy of the Diamond Ensemble}},
		date={2021},
		journal={Discrete \& Computational Geometry},
		volume={66},
		number={4},
		pages={1218\ndash 1238},
		url={https://doi.org/10.1007/s00454-021-00305-4},
	}
	
	\bib{Damir_01}{misc}{
		author={Ferizović, Damir},
		title={{Spherical cap discrepancy of perturbed lattices under the
				Lambert projection}},
		publisher={arXiv},
		date={2022},
		url={https://arxiv.org/abs/2202.13894},
	}
	
	\bib{Healpix}{misc}{
		author={Ferizović, Damir},
		author={Hofstadler, Julian},
		author={Mastrianni, Michelle},
		title={{The Spherical Cap Discrepancy of HEALPix Points}},
		publisher={arXiv},
		date={2022},
		url={https://arxiv.org/abs/2203.07552},
	}
	
	\bib{GradshteynRyzhik2007}{book}{
		author={Gradshteyn, I.~S.},
		author={Ryzhik, I.~M.},
		editor={{A. Jeffrey and D. Zwillinger}},
		title={Table of integrals, series and products},
		publisher={Elsevier Academic Press},
		date={2007},
		note={{Translated from the Russian by Scripta Technica, Inc.}},
	}
	
	\bib{HMS}{article}{
		author={Hardin, D.~P.},
		author={Michaels, T.},
		author={Saff, E.~B.},
		title={A comparison of popular point configurations on {$\Bbb S^2$}},
		date={2016},
		journal={Dolomites Res. Notes Approx.},
		volume={9},
		pages={16\ndash 49},
	}
	
	\bib{Landkof1972}{book}{
		author={Landkof, N.~S.},
		title={Foundations of modern potential theory},
		publisher={Springer-Verlag},
		date={1972},
		note={Translated from the {Russian} by {A. P. Doohovskoy}},
	}
	
	\bib{Lauritsen2021}{article}{
		author={Lauritsen, A.~B.},
		title={Floating {W}igner crystal and periodic jellium configurations},
		date={2021},
		journal={Journal of Mathematical Physics},
		volume={62},
		number={8},
		pages={083305},
	}
	
	\bib{RakhmanovSaffZhou1994}{article}{
		author={Rakhmanov, E.},
		author={Saff, E.},
		author={Zhou, Y.},
		title={Minimal discrete energy on the sphere},
		date={1994},
		journal={Mathematical Research Letters},
		number={1},
		pages={647\ndash 662},
	}
	
	\bib{ShubSmale1993}{article}{
		author={Shub, M.},
		author={Smale, S.},
		title={Complexity of {B}ezout's {T}heorem: {III}. {C}ondition {N}umber
			and {P}acking},
		date={1993},
		journal={Journal of Complexity},
		volume={9},
		pages={4\ndash 14},
	}
	
	\bib{Smale1998}{article}{
		author={Smale, S.},
		title={{Mathematical Problems for the Next Century}},
		date={1998},
		journal={The Mathematical Intelligencer},
		volume={20},
		number={2},
		pages={7\ndash 15},
	}
	
	\bib{SageMath}{manual}{
		author={{The Sage Developers}},
		title={{S}agemath, the {S}age {M}athematics {S}oftware {S}ystem
			({V}ersion \rm{9.4})},
		date={2021},
		note={\url{https://www.sagemath.org}},
	}
	
	\bib{Wagner1989}{article}{
		author={Wagner, G.},
		title={On the product of distances to a point set on a sphere},
		date={1989},
		journal={Journal of the Australian Mathematical Society},
		volume={47},
		number={3},
		pages={466\ndash 482},
	}
	
\end{biblist}
\end{bibdiv}

\end{document}